\documentclass{amsart}

\title{Signature characters of highest-weight representations of $U_{q}(\mathfrak{gl}_{n})$}
\author{Vidya Venkateswaran}
\address{Department of Mathematics, MIT, Cambridge, MA 02139}
\email{\href{mailto:vidyav@math.mit.edu}{vidyav@math.mit.edu}}
\thanks{Research supported by NSF Mathematical Sciences Postdoctoral Research Fellowship DMS-1204900}
\subjclass[2000]{}
\keywords{}
\usepackage{hyperref}
\usepackage[svgnames]{xcolor}
\hypersetup{colorlinks,breaklinks,
            linkcolor=DarkBlue,urlcolor=DarkBlue,
            anchorcolor=DarkBlue,citecolor=DarkBlue}
\usepackage{url}
\usepackage[verbose,letterpaper,tmargin=1in,bmargin=1in,lmargin=1.5in,rmargin=1.5in]{geometry}
\usepackage{euscript}
\usepackage[verbose,letterpaper,tmargin=1in,bmargin=1in,lmargin=1.5in,rmargin=1.5in]{geometry}

\usepackage{amssymb}
\usepackage{amsmath,amsthm}
\usepackage{xypic}
\usepackage{verbatim}
\usepackage{mathdots}

\newtheorem{theorem}{Theorem}[section]

\newtheorem{lemma}{Lemma}[section]
\newtheorem{definition}{Definition}
\newtheorem{proposition}{Proposition}[section]
\newtheorem{corollary}{Corollary}[section]
\newtheorem*{remark}{Remark}

\begin{document}

\begin{abstract}
We consider $U_{q}(\mathfrak{gl}_{n})$, the quantum group of type $A$ for $|q| = 1$, $q$ generic.  We provide formulas for signature characters of irreducible finite-dimensional highest weight modules and Verma modules.  In both cases, the technique involves combinatorics of the Gelfand-Tsetlin bases.  As an application, we obtain information about unitarity of finite-dimensional irreducible representations for arbitrary $q$: we classify the continuous spectrum of the unitarity locus.  We also recover some known results in the classical limit $q \rightarrow 1$ that were obtained by different means.  Finally, we provide several explicit examples of signature characters.

\end{abstract}

\maketitle

\section{Introduction}

Let $G$ be a group and $V$ an irreducible complex representation of $G$.  Determining (1) if $V$ admits a non-degenerate invariant Hermitian form and (2) whether this form is positive-definite (i.e., the representation $V$ is \textit{unitary}) is an important problem in representation theory.  The complete classification of unitary representations remains open in many cases (for example, noncompact Lie groups).  

A refinement of unitarity may be found in the context of signature characters.  More precisely, suppose $F$ is a finite-dimensional vector space equipped with a non-degenerate invariant Hermitian form $\langle \cdot, \cdot \rangle$ and an orthogonal basis $B = \{e_{i}\}$ with respect to the form.  We define the signature of $F$ to be
\begin{equation*}
s(F) = \sum_{e_{i} \in B} \text{sgn}(\langle e_{i}, e_{i} \rangle),
\end{equation*}
that is, $s(F)$ is the number of basis elements with positive norm minus the number of basis elements with negative norm (this does not depend on the choice of orthogonal basis).  Now let $V$ be a (possibly infinite-dimensional) irreducible representation, with invariant Hermitian form $\langle \cdot, \cdot \rangle$, orthogonal basis $B$, and weight space decomposition $V = \oplus V_{\mu}$ with $V_{\mu}$ finite-dimensional.  Suppose also that $B$ is compatible with the weight space decomposition.  Then the signature character is
\begin{equation*}
ch_{s}(V) = \sum_{\mu} s(V_{\mu}) e^{\mu},
\end{equation*}
where the sum is over degrees $\mu$.  Note that if $V$ is unitary, $ch_{s}(V)$ is the usual character with respect to the grading.  Thus, the signature character encodes information about failure of unitarity.

In a previous work \cite{V2}, we have computed signature characters for rational Cherednik algebras and Hecke algebras of type $A$.  In \cite{ESG} and \cite{S}, classifications of unitary representations were provided for these algebras, respectively.  In this paper, we consider the case of $U_{q}(\mathfrak{gl}_n)$, the quantum group of type $A$ with $|q|=1$ and $q$ sufficiently generic (so, in particular, it is not a root of unity such that representations we study are reducible).  We use the Gelfand-Tsetlin basis of (1) finite-dimensional, irreducible highest weight modules and (2) Verma modules, along with a combinatorial algorithm, to compute the signature of the norm of an arbitary basis element.  This allows us to compute the signature character (for both the finite and infinite-dimensional representations) in terms of $q$-binomial coefficients.  As a byproduct of our method, the count $s(V_{\mu})$ for a particular weight space can be determined in terms of data coming from the appropriate Gelfand-Tsetlin patterns.

We use our results to obtain information about when the finite-dimensional highest weight representation is unitary for general $q$: namely, for $n \geq 3$, the continuous spectrum of the unitarity locus (in the terminology of \cite{ESG}) is an arc around $q=1$ (with endpoints determined by indexing weight $\lambda$).  As a consequence, in the limit $q \rightarrow 1$ the form on finite-dimensional representations is positive-definite, agreeing with the fact that these representations of $\mathfrak{gl}_n$ are unitary.  For the Verma modules, if $\lambda$ is a negative weight, we recover Wallach's formula \cite{NW} in the limit $q \rightarrow 1$.  We also provide a signature character formula in the limit $q \rightarrow 1$ for arbitrary $\lambda$ for the Verma modules.  Finally, we provide explicit formulas for signature characters for small values of $n$, and particular representations.  

We mention that Wai-Ling Yee \cite{WY, WY2} studied this problem in the classical limit $q \rightarrow 1$ for any Lie algebra.  She used a wall-crossing technique (starting with Wallach's formula \cite{NW} for $\lambda$ in a particular region) to compute formulas for the signature character.  However, our method is different from hers, since our approach relies on the combinatorics of the Gelfand-Tsetlin basis.  

The outline of the paper is as follows.  In the first section, we set up some notation and recall some preliminary facts about representation theory of $U_{q}(\mathfrak{gl}_{n})$.  In the second section, we compute signature characters for the finite-dimensional representations.  In the third section, we compute signature characters for the infinite-dimensional representations.  In both of these sections, we include the applications mentioned above (unitarity and analysis of the classical $q \rightarrow 1$ limit).  Finally, in the last section, we calculate signature characters in some particular cases.

\medskip
\noindent\textbf{Acknowledgements.} The author would like to thank Pavel Etingof and Eric Rains for many helpful discussions and comments.

\section{Preliminaries}

We set up some notation that will be used throughout the article and recall some standard facts and results from the literature; we will follow \cite{NT} and \cite{M}.

Let $q = e^{\pi i s}$ for $0<s<2$ be on the unit circle and for any $k \in \mathbb{Z}$
\begin{equation*}
[k] = [k]_{q}= \frac{q^{k} - q^{-k}}{q-q^{-1}} = \frac{\sin(\pi sk)}{\sin(\pi s)} \in \mathbb{R},
\end{equation*}
and $\{ \cdot \}: \mathbb{R} \setminus 0 \rightarrow \{ \pm 1 \}$ be the sign map.  Note the symmetries $[-k] = (-1) [k]$ and $[k]_{q} = [k]_{q^{-1}}$.  Since norm factors only involve $[k]$, we may restrict to $0< s < 1$, in particular $\sin(\pi s)$ is positive.  We also note that $\{[k]\} = (-1)^{\lfloor ks \rfloor}$, and $\lim_{q \rightarrow 1} [k] = k$.

For $\lambda = (\lambda_{1}, \dots, \lambda_{n})$ with $\lambda_{i} - \lambda_{i+1} \in \mathbb{Z}_{\geq 0}$, let $L(\lambda)$ denote the finite-dimensional $U_{q}(\mathfrak{gl_{n}})$-module  with highest weight $\lambda$.  Recall that this module has a basis $\{ \zeta_{\Lambda} \}$ indexed by Gelfand-Tsetlin patterns $\Lambda = (\lambda_{i,j})$ for $1 \leq j \leq i \leq n$ with $\lambda_{n,i} = \lambda_{i}$ for $1 \leq i \leq n$, which comes from the multiplicity-one decomposition associated with the chain of subalgebras
\begin{equation*}
U_{q}(\mathfrak{gl}_{1}) \subset U_{q}(\mathfrak{gl}_{2}) \subset \cdots \subset U_{q}(\mathfrak{gl}_{n}).
\end{equation*}
Recall that Gelfand-Tsetlin patterns satisfy the \textit{interlacing condition}
\begin{equation} \label{interl}
\lambda_{m,i} \geq \lambda_{m-1,i} \geq \lambda_{m, i+1}
\end{equation}
for all $m,i$.  
These arrays may be visualized as follows
\begin{equation*}
\begin{array}{ccccccccc}
\lambda_{1} &  & \lambda_{2} &  &\cdots &  & \lambda_{n-1} &  &
\lambda_{n} \\
& \lambda_{n-1,1} &  & \lambda_{n-1,2} &   \cdots & \lambda_{n-1,n-2}&  & \lambda_{n-1,n-1} &  \\
&  & \ddots &  & \cdots &  & \iddots &  &  \\
&  &  & \lambda_{2,1} &  & \lambda_{2,2} &  &  &  \\
&  &  &  & \lambda_{1,1} &  &  &  &
\end{array}%
\end{equation*}
The highest weight element is $\zeta_{\Lambda_{0}}$, where $\Lambda_{0}$ is the pattern with $\lambda_{mi} = \lambda_{i}$ for all $m,i$.  We will write $GT(\lambda)$ to denote the Gelfand-Tsetlin basis of $L(\lambda)$.

For $\lambda = (\lambda_{1}, \dots, \lambda_{n})$ with $\lambda_{i} - \lambda_{i+1} \in \mathbb{R}$ (i.e., generic differences), let $M(\lambda)$ denote the Verma module with highest weight $\lambda$.  Recall that this module has a basis $\{\zeta_{\Lambda}\}$ indexed by arrays $\Lambda = (\lambda_{i,j})$ for $1 \leq j \leq i \leq n$ with $\lambda_{n,i} = \lambda_{i}$ for $1 \leq i \leq n$.  Such arrays must satisfy the condition
\begin{equation*}
\lambda_{mi} - \lambda_{m-1,i} \in \mathbb{Z}_{\geq 0}
\end{equation*}
 for all $m,i$ (crucially, the interlacing condition need not hold).

 The highest weight element is $\zeta_{\Lambda_{0}}$ where $\Lambda_{0}$ is the array with $\lambda_{mi} = \lambda_{i}$ for all $m,i$.  We will write $A(\lambda)$ to denote this basis of $M(\lambda)$.
 
For $\zeta_{\Lambda} \in L(\lambda)$ or $M(\lambda)$ we define the weight function by
\begin{equation} \label{wt}
\text{wt}(\zeta_{\Lambda}) = (\lambda_{11}, \lambda_{21} + \lambda_{22} - \lambda_{11}, \dots, \sum_{i=1}^{n-1} \lambda_{n-1,i} - \sum_{i=1}^{n-2} \lambda_{n-2,i}, \sum_{i=1}^{n} \lambda_{i} - \sum_{i=1}^{n-1} \lambda_{n-1,i}) \in \mathbb{R}^{n}.
\end{equation} 
 Note that $\lambda - \text{wt}(\zeta_{\Lambda})$ is a vector in $\Lambda_{r}^{+}$, the positive root lattice.  So $\text{wt}(\zeta_{\Lambda}) = \lambda - \mu$, for $\mu \in \Lambda_{r}^{+}$.
 
We will write $( \cdot, \cdot)$ to denote the standard Hermitian form on $L(\lambda)$ and $M(\lambda)$.  We recall that bases $GT(\lambda)$ and $A(\lambda)$ are orthogonal with respect to this form, and that the operators $e_{m}, f_{m} \in U_{q}(\mathfrak{gl}_{n})$ are adjoint with respect to this form.  We will compute $ch_{s}(L(\lambda))$ and $ch_{s}(M(\lambda))$ by computing the signs of the norms $(\zeta_{\Lambda}, \zeta_{\Lambda})$ for basis elements $\zeta_{\Lambda}$.  We will use a combinatorial algorithm that describes $(\zeta_{\Lambda}, \zeta_{\Lambda})$ in terms of $(\zeta_{\Lambda_{0}}, \zeta_{\Lambda_{0}})$.

We define the coefficients
\begin{equation*}
\nu_{m,i} = i - \lambda_{m,i} - 1
\end{equation*}
and 
\begin{equation*}
\nu_{i} = i - \lambda_{i} -1.
\end{equation*}
We will write $v_{m,i}^{\Lambda}$ to indicate the underlying array $\Lambda$ if it is not clear from context (for example, if we are comparing different arrays and wish to emphasize which array the coefficients are coming from).

We also define the following coefficients, which are expressed in terms of $[k]$:
\begin{equation*}
\beta_{mi \Lambda} = \prod_{j=1}^{i} \frac{[\nu_{mi} - \nu_{m+1,j} + 1]}{[\nu_{mi} - \nu_{j}+1]} \prod_{j=1}^{i-1} \frac{[\nu_{mi}-\nu_{m-1,j}}{[\nu_{mi}-\nu_{j}]}
\end{equation*}
and
\begin{equation*}
\gamma_{m i \Lambda} = \prod_{j=1}^{i} [\nu_{mi} - \nu_{j}] \prod_{j=1}^{i-1} [\nu_{mi} - \nu_{j}-1] \prod_{j=i+1}^{m+1} [\nu_{m+1,j} - \nu_{mi}] \prod_{j=i}^{m-1} [\nu_{m-1,j} - \nu_{mi} + 1]
\end{equation*}
and
\begin{equation*}
\tau_{mi\Lambda} = \prod_{\substack{j=1 \\ j \neq i}}^{m} \frac{1}{[\nu_{mi} - \nu_{mj}]}.
\end{equation*}

We also write, for $X \in \mathbb{Z}_{> 0}$
\begin{equation*}
[X]! = [X] [ X-1] \cdots [2] [1]
\end{equation*}
and for $X \in \mathbb{R}, k \in \mathbb{Z}_{+}$
\begin{equation*}
[X]_{k}! = [X] [ X-1] \cdots [X-k]
\end{equation*}
and
\begin{equation*}
(X)_{k}! = X(X-1) \cdots (X-k) = \lim_{q \rightarrow 1} [X]_{k}!.
\end{equation*}
We note that (assuming $X \in \mathbb{R} \setminus \mathbb{Z}$ and $k \in \mathbb{Z}_{+}$)
\begin{multline} \label{sgncomp}
\{ (X)_{k}! \} = \begin{cases} 1 ,& \text{if } X-k > 0 \\
(-1)^{k+1} ,& \text{if } X < 0 \\
(-1)^{\lfloor X-k \rfloor} ,& \text{if } X > 0 \text{ and } X-k<0
\end{cases} \\
= (-1)^{\min \{0, \lfloor X \rfloor + 1\}} (-1)^{\min \{0, \lfloor X-k  \rfloor \}}.
\end{multline}

We will need the following result which describes how the quantum group operators act on basis elements in terms of Gelfand-Tsetlin patterns.

\begin{theorem}  [\cite{NT}] \label{NazTar} Let $\zeta_{\Lambda} \in GT(\lambda)$ and $m<n$.  The operators $e_{m}$ and $f_{m}$ (for $m=1, \dots, n-1$) act on the module $L(\lambda)$ as follows
\begin{equation*}
e_{m} \cdot \zeta_{\Lambda} = \sum_{\Lambda^{+}} \gamma_{m i \Lambda} \tau_{m i \Lambda} \zeta_{\Lambda^{+}}
\end{equation*}
and
\begin{equation*}
f_{m} \cdot \zeta_{\Lambda} = \sum_{\Lambda^{-}} \beta_{m i \Lambda} \tau_{m i \Lambda} \zeta_{\Lambda^{-}},
\end{equation*}
where $\Lambda^{+}$ and $\Lambda^{-}$ are Gelfand-Tsetlin patterns obtained from $\Lambda$ by increasing and decreasing (resp.) the $(m,i)$-entry by $1$ (where the sum is over all $i \leq m$).  
\end{theorem}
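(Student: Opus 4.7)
The plan is to outline the Nazarov--Tarasov approach, which proceeds by first reducing to a rank-$(m{+}1)$ calculation and then computing directly using a quantum analogue of the Gelfand--Tsetlin polynomial formula via quantum minors.

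First I would observe that $e_m, f_m$ lie in the subalgebra $U_q(\mathfrak{gl}_{m+1}) \subset U_q(\mathfrak{gl}_n)$. Since the Gelfand--Tsetlin basis arises from the multiplicity-free branching chain $U_q(\mathfrak{gl}_1) \subset \cdots \subset U_q(\mathfrak{gl}_n)$, the portion of the pattern strictly below row $m-1$ only records which $U_q(\mathfrak{gl}_{m+1})$-subcomponent $\zeta_\Lambda$ lives in, together with which $U_q(\mathfrak{gl}_m)$-isotypic component it spans; it does not enter the expansion coefficients of $e_m \zeta_\Lambda, f_m \zeta_\Lambda$. Hence the claim reduces to the case where $\Lambda$ has only rows $m-1, m, m+1$, a rank-2 statement depending on the three row-data $\lambda_{m+1,\bullet}, \lambda_{m,\bullet}, \lambda_{m-1,\bullet}$.

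For the reduced computation, I would realize $\zeta_\Lambda$ as an explicit element built from lowering operators applied to the highest-weight vector $\zeta_{\Lambda_0}$, using the quantum version of the Zhelobenko construction: equivalently, expressing $\zeta_\Lambda$ through certain quantum minors of the RTT-generators of $U_q(\mathfrak{gl}_n)$. Commuting $e_m$ (resp.\ $f_m$) past these operators using the RTT- and quantum Serre relations produces a sum over patterns $\Lambda^+$ (resp.\ $\Lambda^-$) obtained by shifting one entry $\lambda_{m,i}$ by $\pm 1$. The coefficients consolidate into the claimed products $\gamma_{mi\Lambda}\tau_{mi\Lambda}$ and $\beta_{mi\Lambda}\tau_{mi\Lambda}$ after applying a $q$-Lagrange interpolation identity: the factor $\tau_{mi\Lambda} = \prod_{j \neq i} 1/[\nu_{mi}-\nu_{mj}]$ is exactly the partial-fractions weight extracting the residue at the node $\nu_{mi}$ relative to the node set $\{\nu_{mj} : j \neq i\}$, which is the combinatorial mechanism that converts the RTT-commutator (a rational function in a spectral parameter) into the sum over single-entry shifts.

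The main obstacle is careful bookkeeping in the rank-2 computation, in particular distinguishing the $q$-deformed shift factors $[\nu_{mi}-\nu_{m+1,j}+1]$ (depending on the row above) from $[\nu_{mi}-\nu_{m-1,j}]$ (depending on the row below): these coincide up to sign in the classical limit but are genuinely distinct for generic $q$, so the exponents of the $\pm 1$ shifts must be tracked precisely through the order in which one commutes raising/lowering operators past quantum minors. Once the $\gamma$-formula is established, the $\beta$-formula follows by combining the adjoint relation between $e_m$ and $f_m$ with respect to the Hermitian form $(\cdot,\cdot)$ with the ratios $(\zeta_{\Lambda^\pm}, \zeta_{\Lambda^\pm})/(\zeta_\Lambda, \zeta_\Lambda)$, which are determined inductively along the branching chain.
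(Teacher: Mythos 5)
This statement is quoted from Nazarov--Tarasov \cite{NT} and the paper offers no proof of it at all, so there is no internal argument to compare yours against; the only meaningful question is whether your sketch would stand on its own as a proof. As a roadmap it points in the right direction -- the reduction to the three rows $m-1$, $m$, $m+1$ (valid because $e_m,f_m\in U_q(\mathfrak{gl}_{m+1})$ and commute with $U_q(\mathfrak{gl}_{m-1})$, so only row $m$ of the pattern can change and the multiplicity-free branching forces the coefficients to depend only on those three rows), followed by an explicit realization of $\zeta_\Lambda$ via quantum minors/lowering operators, is essentially the Nazarov--Tarasov/Molev strategy, and your reading of $\tau_{mi\Lambda}$ as a Lagrange-interpolation weight is the correct heuristic for where that factor comes from. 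One small slip: the rows strictly below row $m-1$ record the further branching down to $U_q(\mathfrak{gl}_1)$, not the choice of $U_q(\mathfrak{gl}_{m+1})$-subcomponent (that is recorded by the rows above row $m+1$); the conclusion you draw from this is still right.

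The genuine gap is that the entire content of the theorem -- the exact expressions for $\gamma_{mi\Lambda}$ and $\beta_{mi\Lambda}$, including the precise $\pm1$ shifts distinguishing the row-above factors $[\nu_{mi}-\nu_{m+1,j}+1]$ from the row-below factors $[\nu_{mi}-\nu_{m-1,j}]$ -- lives in the ``careful bookkeeping'' you defer, and these formulas also depend on a specific normalization of the basis vectors $\zeta_\Lambda$ that your sketch never fixes; without pinning that down the coefficients are only determined up to rescaling of each basis vector. There is also a circularity hazard in your last step: you propose to deduce the $\beta$-formula from the $\gamma$-formula via adjointness together with the norm ratios $(\zeta_{\Lambda^\pm},\zeta_{\Lambda^\pm})/(\zeta_\Lambda,\zeta_\Lambda)$, but in this paper those norm ratios are themselves \emph{derived from} both halves of the theorem (Lemma 3.1 computes $(\zeta_X,\zeta_X)/(\zeta_\Lambda,\zeta_\Lambda)$ precisely by combining the $e_m$- and $f_m$-formulas with adjointness). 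So that step is only legitimate if the norms are computed independently of the theorem, e.g.\ directly from the quantum-minor realization, which is an additional computation your outline does not supply.
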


\section{Signature characters of finite-dimensional modules}

Let $L(\lambda)$ be the irreducible finite-dimensional representation with highest weight $\lambda$.  Let $\Lambda$ be a Gelfand-Tsetlin pattern with first row equal to $\lambda$; recall that it satisfies the interlacing condition (\ref{interl}).  We will compute the sign $\{ ( \zeta_{\Lambda}, \zeta_{\Lambda} ) \}$ through a series of lemmas that will illustrate the combinatorial technique mentioned in the Introduction.

\begin{lemma}
Let $\Lambda = (\lambda_{i,j})$ be a Gelfand-Tsetlin pattern with first row $\lambda = (\lambda_{1}, \dots, \lambda_{n})$.  Let $X$ be the array with entry $(m,i)$ of $\Lambda$ increased by $1$ (for $1 \leq i \leq m < n$), and all other entries remain the same.  We have 
\begin{equation*}
(\zeta_{X}, \zeta_{X}) = \frac{\beta_{m i X} \tau_{m i X}}{\gamma_{m i \Lambda} \tau_{m i \Lambda}} (\zeta_{\Lambda}, \zeta_{\Lambda}).
\end{equation*}
\end{lemma}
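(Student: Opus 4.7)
The plan is to exploit the adjointness of $e_m$ and $f_m$ with respect to the form, together with the explicit formulas from Theorem \ref{NazTar}. Since $X$ differs from $\Lambda$ only by an increase of $1$ at position $(m,i)$, we can view $X$ as one of the ``raising'' neighbors $\Lambda^{+}$ of $\Lambda$ appearing in the action of $e_m$, and symmetrically view $\Lambda$ as one of the ``lowering'' neighbors $X^{-}$ of $X$ appearing in the action of $f_m$.

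Concretely, I would first apply Theorem \ref{NazTar} twice. On the one hand,
\begin{equation*}
e_{m} \cdot \zeta_{\Lambda} = \sum_{\Lambda^{+}} \gamma_{mj\Lambda}\, \tau_{mj\Lambda}\, \zeta_{\Lambda^{+}},
\end{equation*}
and the summand corresponding to the raising at position $(m,i)$ is exactly $\gamma_{mi\Lambda}\, \tau_{mi\Lambda}\, \zeta_{X}$. On the other hand,
\begin{equation*}
f_{m} \cdot \zeta_{X} = \sum_{X^{-}} \beta_{mjX}\, \tau_{mjX}\, \zeta_{X^{-}},
\end{equation*}
and the summand corresponding to the lowering at position $(m,i)$ is exactly $\beta_{miX}\, \tau_{miX}\, \zeta_{\Lambda}$, since decreasing the $(m,i)$-entry of $X$ by $1$ recovers $\Lambda$.

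Next, I would use the adjointness property $(e_m u, v) = (u, f_m v)$ with $u = \zeta_{\Lambda}$ and $v = \zeta_{X}$. On the left, orthogonality of the Gelfand--Tsetlin basis kills every term in the expansion of $e_m \zeta_{\Lambda}$ except the one proportional to $\zeta_{X}$, yielding $\gamma_{mi\Lambda}\, \tau_{mi\Lambda}\, (\zeta_{X}, \zeta_{X})$. On the right, orthogonality singles out the term proportional to $\zeta_{\Lambda}$ in $f_{m} \zeta_{X}$, giving $\beta_{miX}\, \tau_{miX}\, (\zeta_{\Lambda}, \zeta_{\Lambda})$. Here I use the crucial fact, noted in the preliminaries, that the coefficients $\beta,\gamma,\tau$ are real (they are rational expressions in the $[k]\in\mathbb{R}$), so no complex conjugates appear when pulling scalars through the Hermitian form. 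Equating the two sides and dividing gives the desired identity.

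The only potential obstacle is bookkeeping: one has to be careful that the $\Lambda^{+}$ appearing with coefficient $\gamma_{mi\Lambda}\tau_{mi\Lambda}$ on the $e_m$ side and the $X^{-}$ appearing with coefficient $\beta_{miX}\tau_{miX}$ on the $f_m$ side really are the same indexed pair (namely, $X$ and $\Lambda$ respectively), and that $X$ is itself a valid Gelfand--Tsetlin pattern so that the right-hand coefficients are defined. Both are immediate from the hypotheses, so the proof reduces to the adjointness calculation outlined above.
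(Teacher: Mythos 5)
Your proposal is correct and follows essentially the same route as the paper: identify $X$ as the $(m,i)$-raising of $\Lambda$ and $\Lambda$ as the $(m,i)$-lowering of $X$, apply Theorem \ref{NazTar} together with orthogonality of the Gelfand--Tsetlin basis to compute $(\zeta_{X}, e_{m}\zeta_{\Lambda})$ and $(f_{m}\zeta_{X}, \zeta_{\Lambda})$, and equate them via adjointness. Your added remark that the coefficients are real is a harmless extra precaution not spelled out in the paper.
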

\begin{proof}
Fix $m,i$.  Let $X \in \Lambda^{+}$, then $\Lambda \in X^{-}$ (both by changing the $(m,i)$-entry).  So by Theorem \ref{NazTar} and orthogonality, we have
\begin{equation*}
(\zeta_{X}, e_{m}\zeta_{\Lambda}) = \gamma_{m i \Lambda} \tau_{m i \Lambda} (\zeta_{X}, \zeta_{X}).
\end{equation*}
On the other hand, $(\zeta_{X}, e_{m}\zeta_{\Lambda}) = (f_{m}\zeta_{X}, \zeta_{\Lambda})$ by adjointness, and again by Theorem \ref{NazTar} and orthogonality
\begin{equation*}
(f_{m}\zeta_{X}, \zeta_{\Lambda}) = \beta_{m i X} \tau_{m i X} (\zeta_{\Lambda}, \zeta_{\Lambda}).
\end{equation*}
So we have
\begin{equation*}
(\zeta_{X}, \zeta_{X}) = \frac{\beta_{m i X} \tau_{m i X}}{\gamma_{m i \Lambda} \tau_{m i \Lambda}} (\zeta_{\Lambda}, \zeta_{\Lambda}).
\end{equation*}
\end{proof}

We will now measure the sign change between the two norms.

\begin{lemma}\label{onestep} Let $\Lambda = (\lambda_{i,j})$ be a Gelfand-Tsetlin pattern with first row $\lambda = (\lambda_{1}, \dots, \lambda_{n})$.  Let $X$ be the array with entry $(m,i)$ of $\Lambda$ increased by $1$, and all other entries the same.  We have
\begin{equation*}
\{ (\zeta_{X}, \zeta_{X}) \} = s_{m i \Lambda} \{ (\zeta_{\Lambda}, \zeta_{\Lambda}) \},
\end{equation*}
where $s_{m i \Lambda}$ is equal to
\begin{equation*}
 (-1) \Big \{ \prod_{j=1, j \neq i}^{m}  [\nu_{mi}^{\Lambda} - \nu_{mj}^{\Lambda}][\nu_{mi}^{\Lambda} - \nu_{mj}^{\Lambda}-1] \Big\}  \Big\{ \prod_{j=1}^{m+1} [\nu_{mi}^{\Lambda} - \nu_{m+1,j}^{\Lambda}] \Big\} 
\Big\{ \prod_{j=1}^{m-1} [\nu_{mi}^{\Lambda} - 1 - \nu_{m-1,j}^{\Lambda}] \Big \}. 
\end{equation*}
\end{lemma}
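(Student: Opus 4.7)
The plan is to start from Lemma 1, which already expresses $(\zeta_X,\zeta_X)$ as a scalar multiple of $(\zeta_\Lambda,\zeta_\Lambda)$, and then simply compute the sign of that scalar
\[
R := \frac{\beta_{m i X}\,\tau_{m i X}}{\gamma_{m i \Lambda}\,\tau_{m i \Lambda}}.
\]
Every factor in $R$ is of the form $[k]$ for some integer (or half-integer shift of an integer) $k$, so the sign map is multiplicative and all I need to do is track which factors of $[\pm k]$ arise, use $[-k]=-[k]$ to normalize, and compare with the stated formula for $s_{mi\Lambda}$.

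First I would substitute the definitions. Writing $\nu:=\nu^{\Lambda}_{mi}$, the only effect of passing from $\Lambda$ to $X$ is that $\nu^{X}_{mi}=\nu-1$, while all other $\nu$'s coincide. Plugging this into $\beta_{miX}$ turns the $+1$ shifts in $\beta$ into no-shifts and no-shifts into $-1$ shifts, so
\[
\beta_{miX}=\frac{\prod_{j=1}^{i}[\nu-\nu_{m+1,j}]\,\prod_{j=1}^{i-1}[\nu-1-\nu_{m-1,j}]}{\prod_{j=1}^{i}[\nu-\nu_j]\,\prod_{j=1}^{i-1}[\nu-1-\nu_j]},
\]
and $\tau_{miX}=\prod_{j\ne i}[\nu-1-\nu_{mj}]^{-1}$, $\tau_{mi\Lambda}=\prod_{j\ne i}[\nu-\nu_{mj}]^{-1}$. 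Thus I can write $R=\frac{A\cdot C_{2}}{B\cdot C_{1}\cdot \gamma_{mi\Lambda}}$ with $A,B,C_{1},C_{2}$ the numerator/denominator pieces just named, and the problem reduces to $\{R\}=\{A\}\{B\}\{C_{1}\}\{C_{2}\}\{\gamma_{mi\Lambda}\}$.

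The bookkeeping now has two clean ingredients. (i) Cancellation of the $\nu_{j}$-factors: the block $\prod_{j=1}^{i}[\nu-\nu_j]\prod_{j=1}^{i-1}[\nu-\nu_j-1]$ that sits in $B$ appears identically inside $\gamma_{mi\Lambda}$ (using $[\nu-1-\nu_j]=[\nu-\nu_j-1]$), so these contribute trivially to the sign. (ii) The remaining pieces of $\gamma_{mi\Lambda}$ are $\prod_{j=i+1}^{m+1}[\nu_{m+1,j}-\nu]$ and $\prod_{j=i}^{m-1}[\nu_{m-1,j}-\nu+1]$; applying $[-k]=-[k]$ to each factor converts them to $(-1)^{m-i+1}\prod_{j=i+1}^{m+1}[\nu-\nu_{m+1,j}]$ and $(-1)^{m-i}\prod_{j=i}^{m-1}[\nu-1-\nu_{m-1,j}]$, producing an overall $(-1)$ prefactor and completing the index ranges in $A$: the $[\nu-\nu_{m+1,j}]$ ranges glue to $j=1,\dots,m+1$ and the $[\nu-1-\nu_{m-1,j}]$ ranges glue to $j=1,\dots,m-1$. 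Finally, combining $C_{1}$ and $C_{2}$ yields $\{\prod_{j\ne i}[\nu-\nu_{mj}][\nu-\nu_{mj}-1]\}$.

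Reading off the product exactly reproduces
\[
s_{mi\Lambda}=(-1)\Big\{\prod_{j\ne i}[\nu-\nu_{mj}][\nu-\nu_{mj}-1]\Big\}\Big\{\prod_{j=1}^{m+1}[\nu-\nu_{m+1,j}]\Big\}\Big\{\prod_{j=1}^{m-1}[\nu-1-\nu_{m-1,j}]\Big\},
\]
and Lemma 1 then gives $\{(\zeta_X,\zeta_X)\}=s_{mi\Lambda}\{(\zeta_\Lambda,\zeta_\Lambda)\}$. The only real obstacle is careful bookkeeping: correctly tracking the $\pm$ signs that arise from the reversed-order factors $[\nu_{m+1,j}-\nu]$ and $[\nu_{m-1,j}-\nu+1]$ in $\gamma_{mi\Lambda}$, and verifying that the two disjoint index ranges really glue into the single full products appearing in $s_{mi\Lambda}$.
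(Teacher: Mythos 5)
Your proposal is correct and follows essentially the same route as the paper: both start from the ratio $\beta_{miX}\tau_{miX}/(\gamma_{mi\Lambda}\tau_{mi\Lambda})$ of the preceding lemma, substitute $\nu^{X}_{mi}=\nu^{\Lambda}_{mi}-1$, and use $[-k]=-[k]$ on the reversed factors of $\gamma_{mi\Lambda}$ to produce $(-1)^{m-i+1}(-1)^{m-i}=-1$ and glue the index ranges into the full products. The cancellation of the $[\nu-\nu_j]$ blocks and the combination of $\tau_{miX}$ with $\tau_{mi\Lambda}$ are handled exactly as in the paper's computation.
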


\begin{proof}
By the previous lemma, we have
\begin{multline*}
(\zeta_{X}, \zeta_{X}) \\ = \frac{\beta_{m i X} \tau_{m i X}}{\gamma_{m i \Lambda} \tau_{m i \Lambda}} (\zeta_{\Lambda}, \zeta_{\Lambda}) = \prod_{j=1, j \neq i}^{m} \frac{[\nu_{mi}^{\Lambda} - 1 - \nu_{mj}^{\Lambda}]^{-1}}{[\nu_{mi}^{\Lambda} - \nu_{mj}^{\Lambda}]^{-1}} \prod_{j=1}^{i} \frac{[\nu_{mi}^{\Lambda} - \nu_{m+1,j}^{\Lambda}]}{[\nu_{mi}^{\Lambda} - \nu_{j}]} \prod_{j=1}^{i-1} \frac{[\nu_{mi}^{\Lambda}-1-\nu_{m-1,j}^{\Lambda}]}{[\nu_{mi}^{\Lambda}-1-\nu_{j}]} \\
\times \prod_{j=1}^{i} \frac{1}{[\nu_{mi}^{\Lambda} - \nu_{j}]} \prod_{j=1}^{i-1} \frac{1}{[\nu_{mi}^{\Lambda}-\nu_{j}-1]} \prod_{j=i+1}^{m+1} \frac{1}{[\nu_{m+1,j}^{\Lambda} - \nu_{mi}^{\Lambda}]} \prod_{j=i}^{m-1} \frac{1}{[\nu_{m-1,j}^{\Lambda} - \nu_{mi}^{\Lambda} + 1]} (\zeta_{\Lambda}, \zeta_{\Lambda}),
\end{multline*}
where we have used $\nu_{m,i}^{X} = i - \lambda_{m,i}^{X} - 1 = i - (\lambda_{m,i}^{\Lambda} + 1) - 1 = \nu_{m,i}^{\Lambda} - 1$ and $\lambda_{m,i}^{X}$ denotes the $(m,i)$ entry of $X$.  Using $[k] = (-1)[-k]$ and looking at signs only, we have
\begin{multline*}
\{ (\zeta_{X}, \zeta_{X}) \} = \{ (\zeta_{\Lambda}, \zeta_{\Lambda}) \} \Big\{ \prod_{j=1, j \neq i}^{m}  [\nu_{mi}^{\Lambda} - \nu_{mj}^{\Lambda}][\nu_{mi}^{\Lambda} - \nu_{mj}^{\Lambda}-1] \Big\} (-1)^{m-i+1} \\ \times \Big\{ \prod_{j=1}^{m+1} [\nu_{mi}^{\Lambda} - \nu_{m+1,j}^{\Lambda}] \Big\} 
 (-1)^{m-i} \Big\{ \prod_{j=1}^{m-1} [\nu_{mi}^{\Lambda} - 1 - \nu_{m-1,j}^{\Lambda}] \Big \} \\
=  \{ (\zeta_{\Lambda}, \zeta_{\Lambda}) \} (-1) \Big \{ \prod_{j=1, j \neq i}^{m}  [\nu_{mi}^{\Lambda} - \nu_{mj}^{\Lambda}][\nu_{mi}^{\Lambda} - \nu_{mj}^{\Lambda}-1] \Big\}  \Big\{ \prod_{j=1}^{m+1} [\nu_{mi}^{\Lambda} - \nu_{m+1,j}^{\Lambda}] \Big\} \\
\times \Big\{ \prod_{j=1}^{m-1} [\nu_{mi}^{\Lambda} - 1 - \nu_{m-1,j}^{\Lambda}] \Big \}, 
\end{multline*}
as desired. 
\end{proof}

\begin{lemma} Let $\Lambda = (\lambda_{i,j})$ be a Gelfand-Tsetlin array with first row $\lambda = (\lambda_{1}, \dots, \lambda_{n})$.  Suppose that increasing the $(m,i)$ entry of $\Lambda$ by $k$ (and keeping the other entries fixed) results in a Gelfand-Tsetlin pattern $X$.  We have
\begin{equation*}
\{ (\zeta_{X}, \zeta_{X} \} = s_{m i \Lambda}^{(k)} \{ (\zeta_{\Lambda}, \zeta_{\Lambda}) \},
\end{equation*}
where
\begin{multline*}
s_{m i \Lambda}^{(k)} = \Big \{ \prod_{j=1, j \neq i}^{m} [\nu_{mi}^{\Lambda} - \nu_{mj}^{\Lambda}] [\nu_{mi}^{\Lambda} - k - \nu_{mj}^{\Lambda}] \Big \} (-1)^{k} \\
\times \Big \{  \prod_{j=1}^{m+1} [\nu_{mi}^{\Lambda} - \nu_{m+1,j}^{\Lambda}] [\nu_{mi}^{\Lambda} - \nu_{m+1,j}^{\Lambda} - 1] \cdots [\nu_{mi}^{\Lambda} - \nu_{m+1, j}^{\Lambda} - (k-1)] \Big\} \\
\times \Big \{ \prod_{j=1}^{m-1} [\nu_{mi}^{\Lambda} - \nu_{m-1,j}^{\Lambda}-1] [\nu_{mi}^{\Lambda} - \nu_{m-1,j}^{\Lambda}-2] \cdots [\nu_{mi}^{\Lambda} - \nu_{m-1,j}^{\Lambda} - k] \Big \}.
\end{multline*}
\end{lemma}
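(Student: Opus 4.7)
The plan is to proceed by induction on $k$, with Lemma \ref{onestep} providing both the base case ($k=1$) and the engine for the inductive step. Interpolate between $\Lambda$ and $X$ by a chain of arrays $\Lambda = \Lambda^{(0)}, \Lambda^{(1)}, \dots, \Lambda^{(k)} = X$, where $\Lambda^{(j)}$ agrees with $\Lambda$ outside position $(m,i)$ and has $(m,i)$-entry $\lambda_{mi}^{\Lambda} + j$. Since $X$ itself satisfies the interlacing condition and the bounds on $\lambda_{mi}$ depend only on rows $m\pm 1$ (which are fixed throughout the interpolation), each $\Lambda^{(j)}$ is in fact a Gelfand-Tsetlin pattern, and $\Lambda^{(j+1)}$ is obtained from $\Lambda^{(j)}$ by a single $(m,i)$-increment. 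Applying Lemma \ref{onestep} at each step and composing yields
\begin{equation*}
s_{mi\Lambda}^{(k)} \;=\; \prod_{j=0}^{k-1} s_{mi\Lambda^{(j)}}.
\end{equation*}

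The key structural observation is that $\nu_{mi}^{\Lambda^{(j)}} = \nu_{mi}^{\Lambda} - j$, while $\nu_{m',i'}^{\Lambda^{(j)}} = \nu_{m',i'}^{\Lambda}$ for every $(m',i') \neq (m,i)$. Substituting these into the formula of Lemma \ref{onestep} rewrites $s_{mi\Lambda^{(j)}}$ entirely in terms of the $\nu$'s of $\Lambda$ shifted by $j$, and the remaining task is bookkeeping. The prefactor $(-1)$ at each of the $k$ steps accumulates to $(-1)^{k}$. For the row-$(m+1)$ product, fixing $j'$ and sweeping $j$ from $0$ to $k-1$ produces exactly $[\nu_{mi}^{\Lambda} - \nu_{m+1,j'}^{\Lambda}][\nu_{mi}^{\Lambda} - 1 - \nu_{m+1,j'}^{\Lambda}] \cdots [\nu_{mi}^{\Lambda} - (k-1) - \nu_{m+1,j'}^{\Lambda}]$, matching the second product in the claim; the analogous sweep for row $m-1$ yields $[\nu_{mi}^{\Lambda} - 1 - \nu_{m-1,j'}^{\Lambda}] \cdots [\nu_{mi}^{\Lambda} - k - \nu_{m-1,j'}^{\Lambda}]$, matching the third.

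The only genuinely subtle piece is the first product (row $m$, $j' \neq i$). The one-step factor contributes two brackets per step, namely $[\nu_{mi}^{\Lambda} - j - \nu_{mj'}^{\Lambda}]$ and $[\nu_{mi}^{\Lambda} - j - 1 - \nu_{mj'}^{\Lambda}]$, so as $j$ ranges over $0, \dots, k-1$ every interior quantum integer $[\nu_{mi}^{\Lambda} - \ell - \nu_{mj'}^{\Lambda}]$ with $1 \leq \ell \leq k-1$ appears exactly twice in the product, while the endpoints $[\nu_{mi}^{\Lambda} - \nu_{mj'}^{\Lambda}]$ and $[\nu_{mi}^{\Lambda} - k - \nu_{mj'}^{\Lambda}]$ each appear exactly once. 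Since we are only tracking signs and $\{x\}^{2} = 1$, the interior factors drop out and only the two endpoints survive, reproducing the first product in the claim.

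The main obstacle is just keeping the indices straight through this telescoping-with-cancellation in the first product; organizing the calculation as ``for each fixed $j'$, watch what the two brackets do as $j$ scans from $0$ to $k-1$'' makes the doubling of the interior terms and the survival of the endpoints transparent. Beyond this, the proof is a routine concatenation of $k$ applications of Lemma \ref{onestep}, so no further representation-theoretic input is required.
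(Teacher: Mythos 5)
Your proof is correct and takes essentially the same approach as the paper, whose entire argument is the one-line remark that one iterates Lemma \ref{onestep} $k$ times and keeps track of signs. Your write-up simply supplies the bookkeeping the paper leaves implicit --- the accumulation of $(-1)^{k}$, the sweeps over rows $m\pm 1$, and the cancellation (modulo squares of signs) of the doubled interior brackets in the row-$m$ product, leaving only the two endpoint factors.
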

\begin{proof}
We iterate the result of the previous lemma $k$-times, and take into account signs, to obtain the formula.  
\end{proof}

\begin{lemma}
Let $1 \leq i \leq m <n$ be fixed.  Let $\Lambda$ be a Gelfand-Tsetlin pattern with first row $\lambda = (\lambda_{1}, \dots, \lambda_{n})$.  Suppose it has entries $\lambda_{i,j}$ such that $\lambda_{l,j} = \lambda_{j}$ for $j<i$ and any $l$ and $\lambda_{l,i} = \lambda_{i}$ for $l>m$.  Let $X$ be the Gelfand-Tsetlin pattern that agrees with $\Lambda$ in all entries except $(m,i)$, where it is equal to $\lambda_{m,i} + (\lambda_{i} - \lambda_{m,i}) = \lambda_{i}$.  We have
\begin{equation*}
\{ (\zeta_{X}, \zeta_{X} ) \} = s_{m i \Lambda}^{(\lambda_{i} - \lambda_{m,i})} \{ (\zeta_{\Lambda}, \zeta_{\Lambda}) \},
\end{equation*}
where
\begin{multline*}
s_{m i \Lambda}^{(\lambda_{i} - \lambda_{m,i})}  = \Big\{ \prod_{i<j \leq m} [\nu_{mi}^{\Lambda} - \nu_{mj}^{\Lambda}][\nu_{i} - \nu_{mj}^{\Lambda}] \Big\}  (-1)^{\nu_{mi}^{\Lambda} - \nu_{i}} \{ [\nu_{mi}^{\Lambda} - \nu_{i}]_{(\lambda_{i} - \lambda_{m,i}-1)}! \} \\
\times \Big \{ \prod_{i < j \leq m+1} [\nu_{mi}^{\Lambda} - \nu_{m+1,j}^{\Lambda}]_{(\lambda_{i} - \lambda_{m,i}-1)}! \Big \} \Big \{ \prod_{i \leq j \leq m-1} [\nu_{mi}^{\Lambda} - \nu_{m-1,j}^{\Lambda} - 1]_{(\lambda_{i} - \lambda_{m,i}-1)}! \Big \}.
\end{multline*}
\end{lemma}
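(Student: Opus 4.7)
The plan is to specialize the preceding lemma to $k = \lambda_i - \lambda_{m,i}$ and then simplify using the constancy hypotheses on $\Lambda$. First, observe that
\[
\nu_{mi}^\Lambda - k = (i - \lambda_{m,i} - 1) - (\lambda_i - \lambda_{m,i}) = \nu_i,
\]
so $[\nu_{mi}^\Lambda - k - \nu_{mj}^\Lambda]$ becomes $[\nu_i - \nu_{mj}^\Lambda]$ and $(-1)^k = (-1)^{\nu_{mi}^\Lambda - \nu_i}$. The products of $k$ consecutive $[\,\cdot\,]$ factors appearing in $s_{mi\Lambda}^{(k)}$ fit the defined notation $[\,\cdot\,]_{k-1}!$, so the expression from the previous lemma is already in a form comparable to the target.

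Next, I would split each of the three products in $s_{mi\Lambda}^{(k)}$ according to whether the inner index $j$ is less than, equal to, or greater than $i$. By hypothesis $\lambda_{l,j} = \lambda_j$ for $j < i$ and any $l$, hence $\nu^{\Lambda}_{mj} = \nu^{\Lambda}_{m\pm 1, j} = \nu_j$ for $j < i$; similarly $\lambda_{m+1,i} = \lambda_i$ gives $\nu^{\Lambda}_{m+1,i} = \nu_i$. The contributions with $j > i$ (together with the $j = i$ term in the middle product, which contributes $[\nu_{mi}^\Lambda - \nu_i]_{k-1}!$) match the target formula for $s^{(\lambda_i - \lambda_{m,i})}_{mi\Lambda}$ term by term.

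The remaining task is to show that the residual $j < i$ contribution collapses to sign $+1$. For each $j < i$, the residual is
\[
\{[\nu_{mi}^\Lambda - \nu_j][\nu_i - \nu_j]\} \cdot \{[\nu_{mi}^\Lambda - \nu_j]_{k-1}!\} \cdot \{[\nu_{mi}^\Lambda - \nu_j - 1]_{k-1}!\}.
\]
Expanding the two falling factorials, the shared intermediate factors $[\nu_{mi}^\Lambda - \nu_j - r]$ for $1 \leq r \leq k-1$ each appear squared (hence contribute sign $+1$), while the unpaired endpoints multiply to $[\nu_{mi}^\Lambda - \nu_j] \cdot [\nu_{mi}^\Lambda - \nu_j - k] = [\nu_{mi}^\Lambda - \nu_j][\nu_i - \nu_j]$, using $\nu_{mi}^\Lambda - k = \nu_i$. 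This cancels exactly the first factor, so the $j < i$ residual is $+1$ and only the $j \geq i$ pieces survive. The main obstacle is careful bookkeeping of the three products' index ranges and the $[\,\cdot\,]_k!$ conventions; the substantive content of the proof is the telescoping cancellation just described.
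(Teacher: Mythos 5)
Your proof is correct and takes essentially the same route as the paper: specialize the previous lemma to $k = \lambda_i - \lambda_{m,i}$ (so $\nu_{mi}^{\Lambda} - k = \nu_i$), use the constancy hypotheses to identify $\nu_{mj}^{\Lambda}$, $\nu_{m\pm 1,j}^{\Lambda}$ with $\nu_j$ for $j<i$ and $\nu_{m+1,i}^{\Lambda}$ with $\nu_i$, and then cancel the $j<i$ contributions. Your explicit telescoping computation --- the intermediate factors $[\nu_{mi}^{\Lambda}-\nu_j-r]$ appearing squared and the unpaired endpoints producing $[\nu_{mi}^{\Lambda}-\nu_j][\nu_i-\nu_j]$, which cancels the leftover factor from the first product --- correctly supplies the detail that the paper compresses into ``cancel terms according to whether they appear an even or odd number of times.''
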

\begin{proof}
We first note that, because of the specified parts of $\Lambda$, we have
\begin{equation*}
\nu_{mj}^{\Lambda} = \nu_{j} \text{ for } j<i
\end{equation*}
\begin{equation*}
\nu_{m+1,i}^{\Lambda} = \nu_{i}
\end{equation*}
\begin{equation*}
\nu_{m-1,j}^{\Lambda} = \nu_{j} \text{ for } j<i
\end{equation*}
\begin{equation*}
\nu_{m+1,j}^{\Lambda} = \nu_{j} \text{ for } j<i.
\end{equation*}
We then use the previous Lemma for this particular choice of $\Lambda$ and cancel terms according to whether they appear an even or odd number of times.
\end{proof}

\begin{theorem}
Let $\Lambda$ be a Gelfand-Tsetlin pattern with first row $\lambda = (\lambda_{1}, \dots, \lambda_{n})$.  Then
\begin{multline*}
\{ (\zeta_{\Lambda}, \zeta_{\Lambda}) \}  = \prod_{1 \leq i \leq m < n} s_{m i \Lambda}^{(\lambda_{i} - \lambda_{m,i})}\\ =\prod_{1 \leq i \leq m < n} \Big\{ \prod_{i<j \leq m} [\nu_{mi}^{\Lambda} - \nu_{mj}^{\Lambda}][\nu_{i} - \nu_{mj}^{\Lambda}] \Big\}  (-1)^{\nu_{mi}^{\Lambda} - \nu_{i}} \{ [\nu_{mi}^{\Lambda} - \nu_{i}]_{(\lambda_{i} - \lambda_{m,i}-1)}! \} \\
\times \Big \{ \prod_{i < j \leq m+1} [\nu_{mi}^{\Lambda} - \nu_{m+1,j}^{\Lambda}]_{(\lambda_{i}-\lambda_{m,i}-1)}! \Big \} \Big \{ \prod_{i \leq j \leq m-1} [\nu_{mi}^{\Lambda} - \nu_{m-1,j}^{\Lambda} - 1]_{(\lambda_{i} - \lambda_{m,i}-1)}! \Big \}. 
\end{multline*}
\end{theorem}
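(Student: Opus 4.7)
The plan is to pass from the arbitrary pattern $\Lambda$ to the highest weight pattern $\Lambda_{0}$ by a sequence of single-entry increases, applying the previous lemma at each step to track how the sign of the norm changes. Since $(\zeta_{\Lambda_0}, \zeta_{\Lambda_0}) > 0$, the product of the sign factors collected along this path will equal $\{(\zeta_{\Lambda}, \zeta_{\Lambda})\}$.

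The order of modifications is critical and must be chosen so that the hypotheses of the previous lemma are in force at every step. I would iterate over columns in the order $i = 1, 2, \ldots, n-1$, and within each column over rows in the order $m = n-1, n-2, \ldots, i$; at step $(m,i)$ I raise the entry from its original value $\lambda_{m,i}$ up to $\lambda_i$. Immediately before this step, all entries in columns $j < i$ have already been set to $\lambda_j$ (column $j$ was fully processed earlier in the outer loop), and all entries $(l,i)$ with $l > m$ already equal $\lambda_i$ (either because $l = n$, or because they were processed earlier in the inner loop). These are exactly the conditions of the previous lemma. One must also verify that each intermediate array remains a valid Gelfand-Tsetlin pattern, which amounts to checking interlacing after a single-entry increase; this follows from the original interlacing of $\Lambda$ together with the weakly decreasing property $\lambda_{i-1} \geq \lambda_i$ of the top row.

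The crucial observation letting me write the final product in terms of the \emph{original} $\Lambda$, rather than the ever-changing intermediate patterns, is that in the simplified sign formula from the previous lemma the only $\nu$'s appearing are $\nu_{mi}^{\Lambda}$, $\nu_{mj}^{\Lambda}$ for $i < j \leq m$, $\nu_{m+1,j}^{\Lambda}$ for $i < j \leq m+1$, and $\nu_{m-1,j}^{\Lambda}$ for $i \leq j \leq m-1$. Every such entry sits either in a column $j > i$ or in column $i$ at row $\leq m$, i.e., at a position that has not yet been touched when $(m,i)$ is processed. Hence the $\nu$'s for the intermediate pattern coincide with those for the original $\Lambda$, and one may freely replace the intermediate pattern by $\Lambda$ in each factor.

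Assembling the sign factors over all $(m,i)$ with $1 \leq i \leq m < n$ then gives
\[
\{(\zeta_{\Lambda_0}, \zeta_{\Lambda_0})\} = \Big( \prod_{1 \leq i \leq m < n} s_{mi\Lambda}^{(\lambda_i - \lambda_{m,i})} \Big) \{(\zeta_{\Lambda}, \zeta_{\Lambda})\},
\]
and since $\{(\zeta_{\Lambda_0}, \zeta_{\Lambda_0})\} = 1$ while each factor is $\pm 1$, solving yields the claimed identity. I expect the main obstacle to be the bookkeeping behind the ``untouched positions'' observation together with the interlacing check at each intermediate step --- essentially routine but requiring care, since a different processing order would spoil both.
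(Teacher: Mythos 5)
Your proposal is correct and follows exactly the paper's (one-sentence) proof: iterate the previous lemma along a sequence of single-entry raises taking $\Lambda$ to $\Lambda_{0}$, with the column-by-column, top-down order guaranteeing both the lemma's hypotheses and the invariance of each sign factor's $\nu$-arguments under the earlier modifications. The paper omits all of this bookkeeping; your verification of the processing order, the intermediate interlacing, and the ``untouched positions'' observation supplies precisely the details it leaves implicit.
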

\begin{proof}
Obtained from the previous Lemma, starting with $\Lambda$ and applying the required series of steps in order to produce $\Lambda_{0}$.
\end{proof}

\begin{definition} \label{eltsgn}
For $\zeta_{\Lambda} \in GT(\lambda)$ let
\begin{multline*}
s_{q}(\zeta_{\Lambda}) = \prod_{1 \leq i \leq m < n} \Big\{ \prod_{i<j \leq m} [\nu_{mi} - \nu_{mj}][\nu_{i} - \nu_{mj}] \Big\}  (-1)^{\nu_{mi}- \nu_{i}} \{ [\nu_{mi} - \nu_{i}]_{(\lambda_{i} - \lambda_{m,i}-1)}! \} \\
\times \Big \{ \prod_{i < j \leq m+1} [\nu_{mi} - \nu_{m+1,j}]_{(\lambda_{i}-\lambda_{m,i}-1)}! \Big \} \Big \{ \prod_{i \leq j \leq m-1} [\nu_{mi}- \nu_{m-1,j} - 1]_{(\lambda_{i} - \lambda_{m,i}-1)}! \Big \},
\end{multline*}
which is equal to $\pm 1$, depending on $\Lambda$ and $q$.

\end{definition}

As a result of the previous computation of the sign of the norm for any basis element and Definition \ref{eltsgn}, we obtain a formula for the signature character.

\begin{theorem} \label{finsigchar}
We have
\begin{equation*}
ch_{s}(L(\lambda)) = \sum_{\zeta_{\Lambda} \in GT(\lambda)} s_{q}(\zeta_{\Lambda}) e^{\text{wt}(\zeta_{\Lambda})}.
\end{equation*}
\end{theorem}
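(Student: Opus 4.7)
The plan is to derive the formula directly from the definition of the signature character, using the preceding theorem to evaluate the sign of the norm of each basis element.

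First I would unpack the definition from the Introduction. By definition,
\begin{equation*}
ch_{s}(L(\lambda)) = \sum_{\mu} s(L(\lambda)_{\mu}) \, e^{\mu},
\end{equation*}
where $s(L(\lambda)_{\mu})$ is the number of positive-norm basis vectors in the $\mu$-weight space minus the number of negative-norm ones, computed in any orthogonal basis compatible with the weight decomposition. Since $GT(\lambda)$ is an orthogonal basis of $L(\lambda)$ and each element $\zeta_{\Lambda}$ is a weight vector of weight $\text{wt}(\zeta_{\Lambda})$ given by (\ref{wt}), the subset $\{\zeta_{\Lambda} \in GT(\lambda) : \text{wt}(\zeta_{\Lambda}) = \mu\}$ is an orthogonal basis of $L(\lambda)_{\mu}$. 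Hence
\begin{equation*}
s(L(\lambda)_{\mu}) = \sum_{\substack{\zeta_{\Lambda} \in GT(\lambda) \\ \text{wt}(\zeta_{\Lambda}) = \mu}} \{(\zeta_{\Lambda}, \zeta_{\Lambda})\}.
\end{equation*}

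Next I would substitute the explicit sign formula from the previous theorem, which asserts
\begin{equation*}
\{(\zeta_{\Lambda}, \zeta_{\Lambda})\} = \prod_{1 \leq i \leq m < n} s_{m i \Lambda}^{(\lambda_{i} - \lambda_{m,i})}.
\end{equation*}
By Definition \ref{eltsgn}, this product is precisely $s_{q}(\zeta_{\Lambda})$. (For the base case $\Lambda = \Lambda_{0}$ all factors $\lambda_{i} - \lambda_{m,i}$ vanish, so the product is empty and equals $+1$; this matches the standard normalization that the highest weight vector has positive norm, which is the implicit starting point of the telescoping used in the preceding proof.) Substituting and interchanging the order of the two sums yields
\begin{equation*}
ch_{s}(L(\lambda)) = \sum_{\mu} \sum_{\substack{\zeta_{\Lambda} \in GT(\lambda) \\ \text{wt}(\zeta_{\Lambda}) = \mu}} s_{q}(\zeta_{\Lambda}) \, e^{\mu} = \sum_{\zeta_{\Lambda} \in GT(\lambda)} s_{q}(\zeta_{\Lambda}) \, e^{\text{wt}(\zeta_{\Lambda})},
\end{equation*}
which is the claim.

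There is no substantive obstacle at this point, since all of the real work has been carried out in the sequence of lemmas culminating in the formula for $\{(\zeta_{\Lambda}, \zeta_{\Lambda})\}$; the present theorem is essentially a repackaging. The one auxiliary fact being used is that $GT(\lambda)$ is compatible with the weight-space decomposition, i.e., that $\zeta_{\Lambda}$ is genuinely a weight vector of weight (\ref{wt}) under the Cartan of $U_{q}(\mathfrak{gl}_{n})$; this is standard in the Gelfand-Tsetlin construction via the chain $U_{q}(\mathfrak{gl}_{1}) \subset \cdots \subset U_{q}(\mathfrak{gl}_{n})$ and is recorded in \cite{NT, M}.
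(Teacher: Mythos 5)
Your proposal is correct and matches the paper's (implicit) argument: the paper states the theorem as an immediate consequence of the preceding sign computation and Definition \ref{eltsgn}, and your write-up simply spells out that deduction — orthogonality of $GT(\lambda)$, compatibility with the weight decomposition, the normalization $(\zeta_{\Lambda_0},\zeta_{\Lambda_0})>0$, and substitution of the sign formula. Nothing further is needed.
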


Recall that $\text{wt}(\cdot)$ was defined in (\ref{wt}).

We will provide an alternate formula for $s_{q}(\zeta_{\Lambda})$ by cancelling off signs.  This will be easier to work with.

\begin{lemma}
Let $\Lambda$ be a Gelfand-Tsetlin pattern with first row $\lambda = (\lambda_{1}, \dots, \lambda_{n})$.  We have
\begin{multline} \label{finqsig}
s_{q}(\zeta_{\Lambda})= \prod_{1 \leq i \leq m < n} \Big\{ \prod_{i<j \leq m} [\nu_{mj}^{\Lambda} - \nu_{mi}^{\Lambda}][\nu_{mj}^{\Lambda} - \nu_{i}] \Big\} \{ [\nu_{mi}^{\Lambda} - \nu_{i}]_{(\lambda_{i} - \lambda_{m,i}-1)}! \} \\
\times \Big \{ \prod_{i < j \leq m+1} [\nu_{m+1,j}^{\Lambda} - \nu_{mi}^{\Lambda} + (\lambda_{i}-\lambda_{m,i}-1) ]_{(\lambda_{i}-\lambda_{m,i}-1)}! \Big \} \\
\times  \Big \{ \prod_{i \leq j \leq m-1} [\nu_{m-1,j}^{\Lambda} -\nu_{mi}^{\Lambda}  + (\lambda_{i} - \lambda_{m,i})]_{(\lambda_{i} - \lambda_{m,i}-1)}! \Big \} \\
= \prod_{1 \leq i \leq m < n} \Big\{ \prod_{i<j \leq m} [\nu_{mj}^{\Lambda} - \nu_{mi}^{\Lambda}][\nu_{mj}^{\Lambda} - \nu_{i}] \Big\} \{ [\nu_{mi}^{\Lambda} - \nu_{i}]_{(\lambda_{i} - \lambda_{m,i}-1)}! \} \\
\times \Big \{ \prod_{i < j \leq m+1} [\nu_{m+1,j}^{\Lambda} - \nu_{i}-1 ]_{(\lambda_{i}-\lambda_{m,i}-1)}! \Big \} \\
\times  \Big \{ \prod_{i \leq j \leq m-1} [\nu_{m-1,j}^{\Lambda} -\nu_{i} ]_{(\lambda_{i} - \lambda_{m,i}-1)}! \Big \}
\end{multline}
where now all $q$-binomial coefficients $[n]$ appearing above satisfy $n \geq 0$.  
\end{lemma}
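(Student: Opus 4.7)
The plan is to obtain the target expression from Definition \ref{eltsgn} purely by rewriting individual $[k]$-factors using the symmetry $[-k] = -[k]$, and then to check that the accumulated sign factors exactly cancel the $(-1)^{\nu_{mi} - \nu_i}$ appearing in the original. The second equality of (\ref{finqsig}) will then follow by substituting the identity $\nu_{mi}^{\Lambda} - \nu_i = \lambda_i - \lambda_{m,i}$ into the shifted arguments.

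First I will record the inequalities supplied by the interlacing condition (\ref{interl}): (a) $\nu_{mi}^{\Lambda} - \nu_i = \lambda_i - \lambda_{m,i} \geq 0$, obtained by iterating $\lambda_{m,i} \leq \lambda_{m+1,i} \leq \cdots \leq \lambda_i$ up the $i$-th column; (b) $\nu_{mj}^{\Lambda} - \nu_{mi}^{\Lambda} \geq j-i > 0$ and $\nu_{mj}^{\Lambda} - \nu_i \geq j-i > 0$ for $i < j \leq m$, since rows weakly decrease and $\lambda_i \geq \lambda_{m,j}$; (c) $\nu_{m+1,j}^{\Lambda} - \nu_{mi}^{\Lambda} \geq j - i > 0$ for $i < j \leq m+1$, using $\lambda_{m,i} \geq \lambda_{m+1,i+1} \geq \lambda_{m+1,j}$; (d) $\nu_{m-1,j}^{\Lambda} - \nu_{mi}^{\Lambda} + 1 \geq 1$ for $i \leq j \leq m-1$, using $\lambda_{m,i} \geq \lambda_{m-1,i} \geq \lambda_{m-1,j}$. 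These pinpoint which factors in Definition \ref{eltsgn} currently have non-positive argument and hence must be flipped.

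Next I will flip those factors. In $\prod_{i < j \leq m}[\nu_{mi} - \nu_{mj}][\nu_i - \nu_{mj}]$ each $j$ contributes two negations, so the factor becomes $[\nu_{mj} - \nu_{mi}][\nu_{mj} - \nu_i]$ with no net sign. In $\prod_{i < j \leq m+1}[\nu_{mi} - \nu_{m+1,j}]_{(\lambda_i - \lambda_{m,i}-1)}!$, for each $j$ one rewrites each term as $[\nu_{mi} - \nu_{m+1,j} - t] = -[\nu_{m+1,j} - \nu_{mi} + t]$ and re-indexes the product in reverse, producing the target factor $[\nu_{m+1,j} - \nu_{mi} + (\lambda_i - \lambda_{m,i}-1)]_{(\lambda_i - \lambda_{m,i}-1)}!$ multiplied by $(-1)^{\lambda_i - \lambda_{m,i}}$; the product involving $\nu_{m-1,j}$ is handled identically. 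The factor $\{[\nu_{mi} - \nu_i]_{(\lambda_i - \lambda_{m,i}-1)}!\}$ already has strictly positive arguments (its terms range from $[\lambda_i - \lambda_{m,i}]$ down to $[1]$), so it is unchanged.

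The only real obstacle is the sign bookkeeping: I must verify that the accumulated $(-1)^{\lambda_i - \lambda_{m,i}}$ factors from the flips exactly kill the $(-1)^{\nu_{mi} - \nu_i}$ in Definition \ref{eltsgn}. Summing over $j \in \{i+1, \ldots, m+1\}$ ($m+1-i$ values) and $j \in \{i, \ldots, m-1\}$ ($m-i$ values), the total sign contribution per $(m,i)$ is
\begin{equation*}
(-1)^{[(m+1-i) + (m-i)](\lambda_i - \lambda_{m,i})} = (-1)^{(2m+1-2i)(\lambda_i - \lambda_{m,i})} = (-1)^{\lambda_i - \lambda_{m,i}},
\end{equation*}
which by identity (a) equals $(-1)^{\nu_{mi} - \nu_i}$, so the two cancel and no overall sign remains. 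Finally, applying (a) once more gives $\nu_{m+1,j}^{\Lambda} - \nu_{mi}^{\Lambda} + (\lambda_i - \lambda_{m,i}-1) = \nu_{m+1,j}^{\Lambda} - \nu_i - 1$ and $\nu_{m-1,j}^{\Lambda} - \nu_{mi}^{\Lambda} + (\lambda_i - \lambda_{m,i}) = \nu_{m-1,j}^{\Lambda} - \nu_i$, producing the second equality of (\ref{finqsig}). The non-negativity of all arguments in the rewritten expression is exactly the content of (a)--(d).
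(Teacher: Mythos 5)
Your proposal is correct and follows essentially the same route as the paper: starting from Definition \ref{eltsgn}, using the interlacing inequalities to identify which factors have non-positive argument, flipping them via $[-k]=-[k]$, and verifying that the accumulated sign $(-1)^{(2m+1-2i)(\lambda_i-\lambda_{m,i})}=(-1)^{\lambda_i-\lambda_{m,i}}$ cancels the $(-1)^{\nu_{mi}-\nu_i}$, with the identity $\nu_{mi}-\nu_i=\lambda_i-\lambda_{m,i}$ giving the second form. Your write-up is somewhat more explicit about the re-indexing of the descending factorials than the paper's, but the argument is the same.
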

\begin{proof}
 We start with the formula for $s_{q}(\zeta_{\Lambda})$ in Definition \ref{eltsgn} and note that $[m]_{q} = (-1)[-m]_{q}$ for all $m \in \mathbb{Z}$.  We use the interlacing condition to determine the sign of terms $m$ appearing in the $q$-binomials of the formula.  For $i<j \leq m$ we have
\begin{equation*}
\nu_{mi}^{\Lambda} - \nu_{mj}^{\Lambda} = [i-\lambda_{mi} - 1]  - [j-\lambda_{mj} - 1] = [i-j] + [\lambda_{mj} - \lambda_{mi}] \leq 0
\end{equation*}
\begin{equation*}
\nu_{i} - \nu_{mj}^{\Lambda} = [i - \lambda_{i} - 1] - [j-\lambda_{mj}-1] = [i-j] + [\lambda_{mj} - \lambda_{i}] \leq 0.
\end{equation*}
We also have
\begin{equation*}
\nu_{mi}^{\Lambda} - \nu_{i} = [i - \lambda_{mi}-1] - [i-\lambda_{i} - 1] = \lambda_{i} - \lambda_{mi} \geq 0.
\end{equation*}
For $i<j \leq m+1$, we have
\begin{equation*}
\nu_{mi}^{\Lambda} - \nu_{m+1,j}^{\Lambda} = [i - \lambda_{mi} -1 ] - [j - \lambda_{m+1,j} - 1] = [i-j] + [\lambda_{m+1,j} - \lambda_{mi}] \leq 0.
\end{equation*}
Finally, for $i \leq j \leq m-1$, we have
\begin{multline*}
\nu_{mi}^{\Lambda} - \nu_{m-1,j}^{\Lambda} - 1 = [i - \lambda_{mi} - 1] - [j-\lambda_{m-1,j} - 1] - 1\\
= [i-j] + [\lambda_{m-1,j} - \lambda_{m,i}] - 1 < 0.
\end{multline*}
Putting the negative signed contributions together yields
\begin{equation*}
(-1)^{\nu_{mi}^{\Lambda}-\nu_{i}} (-1)^{(\lambda_{i} - \lambda_{m,i})(m+1-i)} (-1)^{(\lambda_{i} - \lambda_{m,i})(m-i)} 
= (-1)^{\nu_{mi}^{\Lambda} - \nu_{i} + \lambda_{i} - \lambda_{m,i}} = 1
\end{equation*}
as desired.
\end{proof}

We now use Theorem \ref{finsigchar} to obtain some results about unitarity of $L(\lambda)$ for particular values of $s$ (recall that $q = e^{\pi i s}$).  In particular, for $n=2$, we will give a characterization of such $s$, and for $n \geq 3$ we will show that the only interval is the one around $q=1$.

\begin{lemma} \label{n=2char}
Let $n=2$ and $\lambda = (\lambda_{1}, \lambda_{2})$.  Then $L(\lambda)$ is unitary if and only if the signs
\begin{equation*}
\{ [d] \} = \{ [\lambda_{1} - \lambda_{2} - d + 1] \},
\end{equation*}
for all $1 \leq d \leq \lambda_{1} - \lambda_{2}$.
\end{lemma}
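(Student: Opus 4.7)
The plan is to specialize the explicit sign formula (\ref{finqsig}) from the previous lemma to the rank-two case. For $n=2$, a Gelfand-Tsetlin pattern is determined by the single middle entry $\lambda_{1,1}$, which by interlacing satisfies $\lambda_{2} \leq \lambda_{1,1} \leq \lambda_{1}$. Write $d = \lambda_{1} - \lambda_{1,1}$ and $N = \lambda_{1} - \lambda_{2}$, so $d$ ranges over $\{0,1,\ldots,N\}$. The only term of the outer product is $(m,i)=(1,1)$, and in that term both $\prod_{i<j\leq m}$ and $\prod_{i\leq j\leq m-1}$ are empty. The surviving factors are $\{[\nu_{1,1}-\nu_{1}]_{(d-1)}!\}$ and the $j=2$ factor in $\prod_{i<j\leq m+1}$. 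Substituting $\nu_{1,1}=-\lambda_{1,1}$, $\nu_{1}=-\lambda_{1}$ and $\nu_{2,2}=1-\lambda_{2}$ (so $\nu_{1,1}-\nu_{1}=d$ and $\nu_{2,2}-\nu_{1}-1=N$) I expect the formula to reduce to
\begin{equation*}
s_{q}(\zeta_{\Lambda}) \;=\; \{[d]!\}\cdot\{[N][N-1]\cdots[N-d+1]\}.
\end{equation*}

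Next I would interpret unitarity. Since the bases $GT(\lambda)$ are orthogonal and $s_{q}(\zeta_{\Lambda_{0}})=+1$ for the highest weight pattern ($d=0$, both products empty), $L(\lambda)$ is unitary if and only if $s_{q}(\zeta_{\Lambda})=+1$ for every $d=1,\ldots,N$. After regrouping factors, this is the family of conditions
\begin{equation*}
\prod_{k=1}^{d} \{[k]\}\,\{[N-k+1]\} \;=\; +1, \qquad 1 \leq d \leq N.
\end{equation*}

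The final step is an elementary telescoping argument. If $\{[k]\}=\{[N-k+1]\}$ holds for every $1\leq k\leq N$, then each factor in the product equals $+1$, giving unitarity. Conversely, assuming the product condition for all $d$, taking the ratio of the $d$-th and $(d-1)$-st identities isolates the single factor $\{[d]\}\{[N-d+1]\}=+1$, which says $\{[d]\}=\{[N-d+1]\}$. This matches exactly the statement of the lemma.

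I do not anticipate a serious obstacle: the only thing one has to be careful about is the specialization of (\ref{finqsig}) to $n=2$, since several of the nested products degenerate to empty, and the signs of the $[\,\cdot\,]$ entries must be tracked via $[k]=(-1)[-k]$. Once $s_{q}(\zeta_{\Lambda})$ is written in the compact form above, the equivalence with the pairwise condition $\{[d]\}=\{[N-d+1]\}$ is a direct induction on $d$.
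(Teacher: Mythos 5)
Your proposal is correct and follows essentially the same route as the paper: specializing Equation (\ref{finqsig}) at $n=2$ to get $s_{q}(\zeta_{\Lambda}) = \{[d]!\}\{[\lambda_{1}-\lambda_{2}]\cdots[\lambda_{1}-\lambda_{2}-d+1]\}$ and then taking the ratio of consecutive signs to isolate $\{[d][\lambda_{1}-\lambda_{2}-d+1]\}$. The computations of $\nu_{1,1}-\nu_{1}=d$ and $\nu_{2,2}-\nu_{1}-1=\lambda_{1}-\lambda_{2}$ and the telescoping step all match the paper's argument.
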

\begin{proof}
We let $\Lambda$ be the Gelfand-Tsetlin pattern with $\lambda_{1}, \lambda_{2}$ in the first row and $\lambda_{11} = \lambda_{1} - d$ in the second row for $0 \leq d \leq \lambda_{1} - \lambda_{2}$.  We use Equation (\ref{finqsig}) to compute
\begin{equation*}
s_{q}(\zeta_{\Lambda}) = \{ [d] [d-1] \cdots [2][1] \} \times \{ [\lambda_{1} - \lambda_{2}] [\lambda_{1} - \lambda_{2}-1] \cdots [\lambda_{1}-\lambda_{2} - d+1] \}.
\end{equation*}
So, if $L(\lambda)$ is unitary if and only if the ratio
\begin{equation*}
\frac{\{ [d] [d-1] \cdots [2][1] \} \times \{ [\lambda_{1} - \lambda_{2}] [\lambda_{1} - \lambda_{2}-1] \cdots [\lambda_{1}-\lambda_{2} - d+1] \}}{\{ [d-1] \cdots [2][1] \} \times \{ [\lambda_{1} - \lambda_{2}] [\lambda_{1} - \lambda_{2}-1] \cdots [\lambda_{1}-\lambda_{2} - d+2] \}} = \{ [d] [\lambda_{1} - \lambda_{2} - d + 1] \}
\end{equation*}
is equal to $1$, which gives the result.
\end{proof}

\begin{lemma}
Let $\lambda = (\lambda_{1}, \lambda_{2})$ and $\lambda' = (\lambda_{1} + 1, \lambda_{2})$.  Then if $L(\lambda)$ and $L(\lambda')$ are both unitary, the signs $\{[1]\}, \{ [2]\}, \cdots \{ [\lambda_{1} - \lambda_{2}]\}, \{[\lambda_{1}-\lambda_{2} + 1] \}$ are all equal to $1$, or equivalently, $s < \frac{1}{\lambda_{1} - \lambda_{2} + 1}$.
\end{lemma}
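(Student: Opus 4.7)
The plan is to apply Lemma \ref{n=2char} to both $\lambda$ and $\lambda'$, combine the resulting systems of sign identities to force a chain of equalities among $\{[1]\},\ldots,\{[N+1]\}$ where $N=\lambda_1-\lambda_2$, and finally translate this positivity statement into the explicit bound on $s$.

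First, set $N = \lambda_1 - \lambda_2$, so $\lambda_1' - \lambda_2 = N+1$. Unitarity of $L(\lambda)$ gives, via Lemma \ref{n=2char}, the identities
\[
\{[d]\} = \{[N-d+1]\}, \qquad 1 \leq d \leq N,
\]
while unitarity of $L(\lambda')$ yields
\[
\{[d]\} = \{[N-d+2]\}, \qquad 1 \leq d \leq N+1.
\]

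Next, I would combine these two systems. Substituting $d \mapsto N+1-d$ in the second identity gives $\{[N+1-d]\} = \{[d+1]\}$ for $0 \le d \le N$; matching this against the first identity produces $\{[d]\} = \{[d+1]\}$ for $1 \le d \le N$. Chained together, this yields
\[
\{[1]\} = \{[2]\} = \cdots = \{[N+1]\}.
\]
Since $[1]_q = 1 > 0$, every one of these signs must equal $+1$.

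Finally, I would translate the positivity of $\{[k]\}$ for $k=1,\ldots,N+1$ into the claimed bound on $s$. Recall from the preliminaries that $[k] = \sin(\pi s k)/\sin(\pi s)$ with $\sin(\pi s) > 0$, so $\{[k]\} = +1$ is equivalent to $\sin(\pi s k) > 0$. If $s < 1/(N+1)$, then $\pi s k \in (0,\pi)$ for all $k \leq N+1$, giving the desired positivity. Conversely, assume $s \geq 1/(N+1)$ and let $k$ be the smallest positive integer with $sk \geq 1$. Then $(k-1)s < 1$ forces $k \leq N+1$, while $sk = (k-1)s + s < 2$, so $sk \in [1,2)$ and $\sin(\pi s k) \leq 0$, contradicting the established positivity (genericity of $q$ excludes the boundary case $sk = 1$, at which $[k]$ would vanish).

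The only mildly delicate step is the final converse: one has to locate the smallest $k$ where a sign flip is forced and invoke the generic-$q$ hypothesis to rule out the degenerate equality $sk = 1$. The earlier combinatorial step, by contrast, is a straightforward juxtaposition of the two applications of Lemma \ref{n=2char}.
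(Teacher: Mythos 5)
Your proposal is correct and follows essentially the same route as the paper: apply Lemma \ref{n=2char} to both $\lambda$ and $\lambda'$ and chain the resulting sign identities to get $\{[1]\}=\{[2]\}=\cdots=\{[\lambda_1-\lambda_2+1]\}=1$. Your reindexing makes the paper's ``iterate this argument'' step explicit, and you additionally verify the equivalence with the bound $s<\tfrac{1}{\lambda_1-\lambda_2+1}$, which the paper asserts without proof; both additions are sound.
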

\begin{proof}
Using the Lemma \ref{n=2char} for $L(\lambda)$ gives that $[1]$ and $[\lambda_{1} - \lambda_{2}]$ have the same sign.  Similarly, applying it to $L(\lambda')$ gives that $[1]$ and $[\lambda_{1} - \lambda_{2} + 1]$ have the same sign, as do $[2]$ and $[\lambda_{1} - \lambda_{2}]$.  By iterating this argument and using $\{ [1] \} = 1$, we get the result.
\end{proof}

\begin{theorem}
Let $n \geq 3$ and $q = e^{\pi i s}$.  Then the module $L(\lambda)$ is unitary if and only if $0 < s < 1/M_{\lambda}$, where
\begin{equation*}
M_{\lambda} = (\lambda_{1} - \lambda_{n}) + (n-2).
\end{equation*}
\end{theorem}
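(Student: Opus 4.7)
The plan is to handle the two directions separately; sufficiency is a routine bounds check, while necessity combines induction with a specific Gelfand-Tsetlin pattern. For sufficiency, assume $0 < s < 1/M_\lambda$, so $[k] > 0$ for all integers $1 \leq k \leq M_\lambda$. It suffices to verify that every $q$-bracket appearing in formula (\ref{finqsig}) for $s_q(\zeta_\Lambda)$ has argument in $\{1, \ldots, M_\lambda\}$. The upper bound follows from the interlacing inequalities $\lambda_1 \geq \lambda_{m,1} \geq \cdots \geq \lambda_{m,m} \geq \lambda_n$: in items 1 and 2, $\nu_{mj}^\Lambda - \nu_{mi}^\Lambda = (j-i) + (\lambda_{mi} - \lambda_{mj}) \leq (n-2) + (\lambda_1 - \lambda_n) = M_\lambda$, and the factorial terms in items 3--5 decompose into brackets bounded analogously. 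Hence $s_q(\zeta_\Lambda) = +1$ for every $\Lambda$, so $L(\lambda)$ is unitary.

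For necessity, assume $L(\lambda)$ is unitary and proceed by induction on $n \geq 3$. The restriction $L(\lambda)|_{U_q(\mathfrak{gl}_{n-1})} = \bigoplus_\mu L(\mu)$, indexed by $\mu$ interlacing $\lambda$, is a direct sum of unitary $\mathfrak{gl}_{n-1}$-modules. For the base case $n = 3$, each summand is a unitary $\mathfrak{gl}_2$-module, so applying the two preceding Lemmas to the valid sub-reps $\mu = (\lambda_1 - 1, \lambda_3)$ and $\mu' = (\lambda_1, \lambda_3)$ (assuming $\lambda_1 > \lambda_2$) forces $\{[k]\} = +1$ for $1 \leq k \leq \lambda_1 - \lambda_3 + 1 = M_\lambda$. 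For $n \geq 4$, the inductive hypothesis applied to each $L(\mu)$ yields $s < 1/M_\mu$; maximizing over $\mu$, attained at $\mu = (\lambda_1, \ldots, \lambda_{n-2}, \lambda_n)$, gives $s < 1/(M_\lambda - 1)$, i.e., $\{[k]\} = +1$ for $1 \leq k \leq M_\lambda - 1$. To upgrade to $s < 1/M_\lambda$, I consider the Gelfand-Tsetlin pattern $\Lambda^*$ defined by $\lambda_{m,1}^{\Lambda^*} = \lambda_1 - 1$ for $1 \leq m \leq n-1$ and $\lambda_{m,i}^{\Lambda^*} = \lambda_i$ otherwise; this is valid when $\lambda_1 > \lambda_2$, and analogous patterns (perturbing the first ``movable'' column of $\Lambda_0$) treat the degenerate cases.

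Because $\lambda_i - \lambda_{m,i} = 0$ whenever $i \geq 2$, all factorial terms in (\ref{finqsig}) indexed by $(m,i)$ with $i \geq 2$ are empty, and the products in items 1 and 2 there reduce to squared brackets giving $+1$; the non-trivial contributions come solely from $(m,1)$. A careful accounting---pairing identical $\nu$-differences between items 1 and 4 (argument $(j-2) + (\lambda_1 - \lambda_j)$) and between items 2 and 5 (argument $(j-1) + (\lambda_1 - \lambda_j)$), so that repeated factors square to $+1$---reduces the formula to
\begin{equation*}
s_q(\zeta_{\Lambda^*}) = \prod_{m=1}^{n-1} \{[(m-1) + (\lambda_1 - \lambda_{m+1})]\} \cdot \prod_{m=2}^{n-1} \{[(m-1) + (\lambda_1 - \lambda_m)]\}.
\end{equation*}
Under genericity, exactly one factor equals $\{[M_\lambda]\}$ (arising from $m = n-1$ in the first product); every other factor has argument at most $M_\lambda - 1$ and equals $+1$ by the previous step. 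Hence $s_q(\zeta_{\Lambda^*}) = \{[M_\lambda]\}$, which unitarity forces to be $+1$, yielding $s < 1/M_\lambda$ and completing the induction.

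The main obstacle will be the careful bookkeeping in the simplification of $s_q(\zeta_{\Lambda^*})$: many factors in (\ref{finqsig}) must be paired off and recognized as squared $q$-brackets so that a single $\{[M_\lambda]\}$ survives. Degenerate cases (e.g., $\lambda_{n-1} = \lambda_n$, where the second product may pick up another copy of $[M_\lambda]$) require using a perturbation of a different column of $\Lambda_0$ to isolate the required bracket.
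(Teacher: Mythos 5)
Your overall architecture is the same as the paper's: sufficiency by checking that every bracket argument in (\ref{finqsig}) lies in $\{1,\dots,M_\lambda\}$, and necessity by combining the restriction-to-$U_q(\mathfrak{gl}_{n-1})$ induction (which only yields $s<1/(M_\lambda-1)$) with one explicit Gelfand--Tsetlin witness whose sign is $\{[M_\lambda]\}$ on the critical interval. The one genuine difference is the witness: the paper drops the corner entry $\lambda_{n-1,n-1}$ to $\lambda_n$ and isolates the single bracket $[\nu_{n-1,n-1}-\nu_1]=[M_\lambda]$, whereas you drop the entire first column by $1$ and, after the pairing you describe, obtain $s_q(\zeta_{\Lambda^*})=\prod_{m=1}^{n-1}\{[(m-1)+(\lambda_1-\lambda_{m+1})]\}\prod_{m=2}^{n-1}\{[(m-1)+(\lambda_1-\lambda_m)]\}$. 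I checked this computation and it is correct; when $\lambda_1>\lambda_2$ and $\lambda_{n-1}>\lambda_n$ exactly one factor is $\{[M_\lambda]\}$ and the rest are forced to be $+1$ by the inductive bound, so your upgrade step works and is arguably cleaner to verify than the paper's ``one can check that all other terms are smaller.''

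Two concrete problems. First, your base case is off by one: the two $n=2$ lemmas applied to the pair $(\lambda_1-1,\lambda_3)$ and $(\lambda_1,\lambda_3)$ force $\{[k]\}=+1$ only for $1\le k\le(\lambda_1-1)-\lambda_3+1=\lambda_1-\lambda_3=M_\lambda-1$, not up to $M_\lambda$ as you claim. As written the base case does not prove the statement; you must run the $\Lambda^*$ upgrade at $n=3$ as well (which does work there), exactly as the paper uses its witness to close the gap between $1/(\lambda_1-\lambda_3)$ and $1/(\lambda_1-\lambda_3+1)$. Second, your proposed repair of the degenerate case $\lambda_{n-1}=\lambda_n$ --- ``perturbing a different column'' --- cannot succeed: a perturbation in column $i\ge 2$ only produces brackets with arguments bounded by roughly $(n-1-i)+(\lambda_i-\lambda_n)+O(1)<M_\lambda$, so no such pattern ever exhibits an isolated $[M_\lambda]$. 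This is not merely a bookkeeping issue; for degenerate $\lambda$ (e.g.\ $\lambda=(1,0,0)$, where every basis vector of the three-dimensional module has positive norm for all generic $s$) no witness exists at all. To be fair, the paper's own proof is equally silent on this point (its witness also degenerates there), so you should either restrict to $\lambda$ with $\lambda_1>\lambda_2$ and $\lambda_{n-1}>\lambda_n$ or acknowledge that the boundary cases require a separate analysis rather than a different perturbation.
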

\begin{proof}

First we will show that $L(\lambda)$ is unitary for $0<s< \frac{1}{M_{\lambda}}$, but it is not unitary for $s = \frac{1}{M_{\lambda}} + \epsilon$ for $\epsilon$ small.  Note first the sign computation
\begin{equation*}
\{ [k]_{q} \} = \Big\{\frac{e^{k\pi i s} - e^{-k\pi i s}}{e^{\pi i s} - e^{-\pi i s}} \Big\} = \Big\{ \frac{\sin(k\pi s)}{\sin(\pi s)}\Big \} = \{\sin(k \pi s) \} = (-1)^{\lfloor ks \rfloor},
\end{equation*}
since we are restricting to $0<s<1$ so that $\sin(\pi s)$ is positive.  Let $0< s < \frac{1}{M_{\lambda}}$ and $\zeta_{\Lambda} \in GT(\lambda)$ be arbitrary.  Recall the formula for $s_{q}(\zeta_{\Lambda})$ in Equation (\ref{finqsig}).  Using the interlacing condition, one can check that each term $[k]$ appearing in that formula satisfies 
$0<k \leq \nu_{n} - \nu_{1}-1$, so that
\begin{equation*}
 0< ks  \leq (\nu_{n} - \nu_{1}-1)s < \frac{\nu_{n} - \nu_{1}-1}{M_{\lambda}} = 1.
\end{equation*}
Thus, $\lfloor ks \rfloor = 0$, so that each factor appearing in the formula is positive.  Thus, $s_{q}(\zeta_{\Lambda}) = 1$ for all $\zeta_{\Lambda}$, so the representation $L(\lambda)$ is unitary for $s$ in the interval $(0, 1/M_{\lambda})$.  Now let $s = \frac{1}{M_{\lambda}} + \epsilon$.  Consider the element $\Lambda$ with $\lambda_{n-1,n-1} = \lambda_{n}$ and all other $\lambda_{ij} > \lambda_{n}$.  Inspecting Equation (\ref{finqsig}), we have the term $[k]$ with $k = \nu_{n-1,n-1} - \nu_{1} = \lambda_{1} - \lambda_{n} + (n-2)$, so
\begin{equation*}
1< ks < 2,
\end{equation*} 
since $\frac{1}{\lambda_{1} - \lambda_{n} + (n-2)} < s < \frac{2}{\lambda_{1} - \lambda_{n} + (n-2)}$.  Thus $(-1)^{\lfloor ks\rfloor} = (-1)$.  One can check that all other terms $[k']$ that appear in Equation (\ref{finqsig}) satisfy $k' < \lambda_{1} - \lambda_{n} + (n-2)$, and since $s < \frac{1}{\lambda_{1}-\lambda_{n} + (n-3)}$, we have $0 < sk' < 1$.  Thus $(-1)^{\lfloor 0 \rfloor} = 1$.  Thus $s_{q}(\zeta_{\Lambda}) = -1$, so $L(\lambda)$ is not unitary.

Now we will show that if $L(\lambda)$ is unitary, we must have the above bounds on $s$.  We will induct on $n$.  For the base case $n=3$, let $\lambda = (\lambda_{1}, \lambda_{2}, \lambda_{3})$ and assume $L(\lambda)$ is unitary.  Then the restriction of $L(\lambda)$ as a $U_{q}(\mathfrak{gl}_{2})$-module must decompose as a sum of unitary pieces, and these are indexed by $\mu$ interlacing $\lambda$.  Thus $L(\mu')$ with $\mu' = (\lambda_{1}, \lambda_{3})$ is unitary.  Similarly $L(\mu)$ with $\mu = (\lambda_{1} - 1, \lambda_{3})$ is also unitary.  By the previous lemma, this implies
$s < \frac{1}{\lambda_{1} - \lambda_{3}}$.  But in the first part of the proof, we proved that $L(\lambda)$ is not unitary for $s = \frac{1}{\lambda_{1} - \lambda_{3} + 1} + \epsilon$ for $\epsilon$ small, thus it is not unitary on $\frac{1}{\lambda_{1} - \lambda_{3} + 1} < s < \frac{1}{\lambda_{1} - \lambda_{3}}$.  Thus, we must have $0 < s < \frac{1}{\lambda_{1} - \lambda_{3} + 1}$, which establishes the base case.  We assume the result for $n-1 \geq 3$, and will show it holds for $n$.  Let $\lambda = (\lambda_{1}, \dots, \lambda_{n})$ and suppose $L(\lambda)$ is unitary.  As in the argument for $n=3$, the restriction of $L(\lambda)$ over $U_{q}(\mathfrak{gl}_{n-1})$ must decompose as a sum of unitary pieces, indexed by $\mu$ interlacing $\lambda$.  In particular, we may take $\mu$ with $\mu_{1} = \lambda_{1}$ and $\mu_{n-1} = \lambda_{n}$.  Thus, by the induction hypothesis, we must have $s < \frac{1}{\lambda_{1} - \lambda_{n} + (n-3)}$.  But in the first part of the proof, we showed that $L(\lambda)$ is not unitary for $s = \frac{1}{\lambda_{1} - \lambda_{n} + (n-2)} + \epsilon$.  Thus $0 < s < \frac{1}{\lambda_{1} - \lambda_{n} + (n-2)}$ as desired.

\end{proof}

\begin{remark}
Note that the previous result implies that $L(\lambda)$ is unitary in the classical limit $q=1$, which is known from representation theory of $\mathfrak{gl}_{n}$.  (This is also immediately seen by inspection of Equation (\ref{finqsig}) since $lim_{q \rightarrow 1} [k] = k$ and in that formula all appearing $k$ are positive.)
\end{remark}

We note that at $n=2$, there are several intervals of $s$ for which $L(\lambda)$ is unitary.  We use Lemma \ref{n=2char} to compute some examples.  Let $\lambda = (\lambda_{1}, \lambda_{2})$.  We obtain the following intervals for $s$ depending on $\lambda_{1} - \lambda_{2}$ where $L(\lambda)$ is unitary:
\begin{itemize}
\item For $\lambda_{1} - \lambda_{2} = 3$, we get $0<s<\frac{1}{3}$ and $\frac{2}{3} < s < 1$.
\item For $\lambda_{1} - \lambda_{2} = 4$, we get $0<s<\frac{1}{4}$ and $\frac{1}{2} < s < \frac{2}{3}$.
\item For $\lambda_{1} - \lambda_{2} = 5$, we get $0<s< \frac{1}{5}$ and $\frac{4}{5} < s < 1$.
\item For $\lambda_{1} - \lambda_{2} = 6$, we get $0<s< \frac{1}{6}$ and $\frac{2}{5}<s<\frac{1}{2}$ and $\frac{2}{3} < s < \frac{3}{4}$.
\item For $\lambda_{1} - \lambda_{2} = 7$, we get $0<s<\frac{1}{7}$, $\frac{1}{3} < s < \frac{2}{5}$, $\frac{3}{5} < s < \frac{2}{3}$, and $\frac{6}{7} < s < 1$.

\end{itemize}

\section{Signature characters for Verma modules}
In this section, we compute signature characters for the modules $M(\lambda)$.  The Gelfand-Tsetlin formulae in Theorem \ref{NazTar} still hold, but the basis elements are indexed by arrays, not patterns (recall the discussion in the first section).  

\begin{definition} For $\zeta_{\Lambda} \in A(\lambda)$, let 
\begin{multline*}
\tilde s_{q}(\zeta_{\Lambda}) = \prod_{1 \leq i \leq m < n} \Big\{ \prod_{i<j \leq m} [\nu_{mj} - \nu_{mi}][\nu_{mj} - \nu_{i}] \Big\} \{ [\nu_{mi}- \nu_{i}]_{(\lambda_{i} - \lambda_{m,i}-1)}! \} \\
\times \Big \{ \prod_{i < j \leq m+1} [\nu_{m+1,j} - \nu_{i}-1 ]_{(\lambda_{i}-\lambda_{m,i}-1)}! \Big \} \\
\times  \Big \{ \prod_{i \leq j \leq m-1} [\nu_{m-1,j}-\nu_{i} ]_{(\lambda_{i} - \lambda_{m,i}-1)}! \Big \},
\end{multline*}
it is equal to $\pm 1$ depending on $\Lambda$ and $q$.
\end{definition}

\begin{theorem} \label{infsigchar} We have
\begin{equation*}
ch_{s}(M(\lambda)) = \sum_{\zeta_{\Lambda} \in A(\lambda)} \tilde s_{q}(\zeta_{\Lambda}) e^{\text{wt}(\zeta_{\Lambda})}.
\end{equation*}
\end{theorem}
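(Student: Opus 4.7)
The plan is to mirror the derivation in Section 2, replacing Gelfand--Tsetlin patterns by arrays in $A(\lambda)$. Three ingredients let the argument go through unchanged: (i) the action formulas of Theorem \ref{NazTar} hold verbatim on $M(\lambda)$; (ii) the form $(\cdot,\cdot)$ on $M(\lambda)$ is orthogonal on $A(\lambda)$, with $e_m$ adjoint to $f_m$; and (iii) $\zeta_{\Lambda_0}$ has positive norm. I would first observe that the proofs of the norm-ratio lemma, Lemma \ref{onestep}, its iterated $k$-step version, and the specialized reset-the-$(m,i)$-entry-to-$\lambda_i$ lemma use only (i), (ii), and algebraic identities for the $\nu$-coefficients; the interlacing condition is never actually invoked in those derivations. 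These four lemmas therefore extend to arrays in $A(\lambda)$, producing exactly the same local sign-change factor $s_{mi\Lambda}^{(\lambda_i-\lambda_{m,i})}$ as in the finite case.

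Second, I would transform a fixed $\Lambda \in A(\lambda)$ into $\Lambda_0$ by a sequence of single-entry increments and check that every intermediate array still lies in $A(\lambda)$, i.e.\ still satisfies $\lambda_{m,i}-\lambda_{m-1,i}\in\mathbb{Z}_{\geq 0}$ at every stage. Processing the pairs $(i,m)$ in the order $i=1,2,\dots,n-1$ and, for each fixed $i$, the rows $m=n-1,n-2,\dots,i$ in decreasing order, incrementing the $(m,i)$-entry one unit at a time from $\lambda_{m,i}$ up to $\lambda_i$, works: at the moment we touch $(m,i)$, the entries $(m+1,i),\dots,(n-1,i)$ have already been saturated to $\lambda_i$, so the difference $\lambda_{m+1,i}-\lambda_{m,i}$ only shrinks toward $0$ and never becomes negative, while $\lambda_{m,i}-\lambda_{m-1,i}$ starts non-negative and only grows. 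This ordering also matches the hypotheses of the specialized lemma, so iterating that lemma yields $\{(\zeta_\Lambda,\zeta_\Lambda)\}$ in the form of Definition \ref{eltsgn} (with $s_q$ now computed on an array).

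Finally, I would show $s_q(\zeta_\Lambda)=\tilde s_q(\zeta_\Lambda)$ as sign expressions by the same algebraic manipulation that produced Equation (\ref{finqsig}). Applying $\{[-a]\}=-\{[a]\}$ converts each factor $[\nu_{mi}-\nu_{mj}]$, $[\nu_i-\nu_{mj}]$, $[\nu_{mi}-\nu_{m+1,j}]_{\ell}!$, $[\nu_{mi}-\nu_{m-1,j}-1]_{\ell}!$ into the corresponding factor of $\tilde s_q$, introducing a total sign
\[
(-1)^{(m+1-i)(\lambda_i-\lambda_{m,i})+(m-i)(\lambda_i-\lambda_{m,i})} = (-1)^{(2m+1-2i)(\lambda_i-\lambda_{m,i})} = (-1)^{\lambda_i-\lambda_{m,i}}
\]
per $(m,i)$, which cancels the explicit $(-1)^{\nu_{mi}-\nu_i}=(-1)^{\lambda_i-\lambda_{m,i}}$ in Definition \ref{eltsgn}. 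Since this cancellation is purely formal, it remains valid in the Verma setting even though the $q$-binomials inside $\tilde s_q$ can now have negative arguments (the ``$n\geq 0$'' clause at the end of the lemma giving Equation (\ref{finqsig}) is the one statement that fails here). Summing $\tilde s_q(\zeta_\Lambda) e^{\text{wt}(\zeta_\Lambda)}$ over $\zeta_\Lambda\in A(\lambda)$ gives the formula. The only genuine novelty relative to Section 2 is the ordering argument in the second step: interlacing automatically kept the intermediate patterns valid there, whereas here validity of the intermediate arrays must be verified by hand, and this is where I would expect the most care to be required.
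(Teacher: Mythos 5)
Your proposal is correct and follows the same route as the paper, which proves this theorem simply by observing that the derivation of Theorem \ref{finsigchar} and Equation (\ref{finqsig}) carries over verbatim once Gelfand--Tsetlin patterns are replaced by arrays in $A(\lambda)$. The two points you single out for extra care --- the explicit increment ordering that keeps every intermediate array valid, and the observation that the sign cancellation producing Equation (\ref{finqsig}) is purely formal while only the ``$n\geq 0$'' conclusion fails --- are exactly the details the paper leaves implicit, and you have verified them correctly.
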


Recall that $\text{wt}(\cdot)$ was defined in (\ref{wt}).

\begin{proof}
Obtained from Theorem \ref{finsigchar} and Equation (\ref{finqsig}), with the only difference being the conditions on arrays.
\end{proof}

\begin{definition}
For $\zeta_{\Lambda} \in A(\lambda)$, let
\begin{multline*}
\tilde c(\zeta_{\Lambda}) = \prod_{1 \leq i \leq m < n} \Big\{ \prod_{i<j \leq m} (\nu_{mj} - \nu_{mi})(\nu_{mj} - \nu_{i}) \Big\} \\
\times \Big \{ \prod_{i < j \leq m+1} (\nu_{m+1,j} - \nu_{i}-1 )(\nu_{m+1,j}  - \nu_{i} - 2) \cdots (\nu_{m+1,j} - \nu_{i} - 1 - (\lambda_{i} - \lambda_{mi} - 1)) \Big \} \\
\times  \Big \{ \prod_{i < j \leq m-1} (\nu_{m-1,j}-\nu_{i} )(\nu_{m-1,j} - \nu_{i} - 1) \cdots (\nu_{m-1,j}-\nu_{i} - (\lambda_{i} - \lambda_{mi} - 1))\Big \}.
\end{multline*}
\end{definition}

\begin{proposition}
For $\zeta_{\Lambda} \in A(\lambda)$, we have
\begin{equation*}
\lim_{q \rightarrow 1} \tilde s_{q}(\zeta_{\Lambda}) = \tilde c(\zeta_{\Lambda}).
\end{equation*}
\end{proposition}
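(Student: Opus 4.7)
The plan is to apply the elementary fact that $[k]_{q} \to k$ and $[X]_{j}! \to (X)_{j}!$ as $q \to 1$, stated in the preliminaries, together with the observation that the sign function $\{\cdot\}$ is locally constant on $\mathbb{R} \setminus \{0\}$. Under the genericity assumption on $\lambda$, each limiting value $\nu_{mj}-\nu_{mi}$, $\nu_{mj}-\nu_{i}$, $\nu_{m+1,j}-\nu_{i}-k$, or $\nu_{m-1,j}-\nu_{i}-k$ appearing inside the factors of $\tilde c(\zeta_{\Lambda})$ is nonzero (since it differs from a generic real by an integer), so for $q$ sufficiently close to $1$ each factor $\{[\cdot]_{q}\}$ already equals the sign of its limit.

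First I would match the factors of $\tilde s_{q}(\zeta_{\Lambda})$ against those of $\tilde c(\zeta_{\Lambda})$ pairwise. Three groups correspond directly: the $\prod_{i<j\leq m}[\nu_{mj}-\nu_{mi}][\nu_{mj}-\nu_{i}]$ piece, the $\prod_{i<j\leq m+1}[\nu_{m+1,j}-\nu_{i}-1]_{(\lambda_{i}-\lambda_{mi}-1)}!$ piece, and the $i<j\leq m-1$ portion of the final product in $\tilde s_{q}$. Applying $[k]_{q} \to k$ to each individual $q$-integer and using multiplicativity of signs converts each of these factors into its counterpart in $\tilde c(\zeta_{\Lambda})$.

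The remaining factors of $\tilde s_{q}$ without an analog in $\tilde c$ are the middle factor $\{[\nu_{mi}-\nu_{i}]_{(\lambda_{i}-\lambda_{mi}-1)}!\}$ and the $j=i$ term of the last product, $\{[\nu_{m-1,i}-\nu_{i}]_{(\lambda_{i}-\lambda_{mi}-1)}!\}$. Using $\nu_{mi}-\nu_{i} = \lambda_{i}-\lambda_{mi}$ and $\nu_{m-1,i}-\nu_{i} = \lambda_{i}-\lambda_{m-1,i}$, these simplify to $[\lambda_{i}-\lambda_{mi}]!$ and $[\lambda_{i}-\lambda_{m-1,i}][\lambda_{i}-\lambda_{m-1,i}-1]\cdots[\lambda_{mi}-\lambda_{m-1,i}+1]$, respectively. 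The Verma array condition $\lambda_{p,i}-\lambda_{p-1,i}\in\mathbb{Z}_{\geq 0}$ forces $\lambda_{i} \geq \lambda_{m-1,i} \geq \lambda_{mi}$, so every $q$-integer $[k]_{q}$ appearing in these two products has $k \geq 1$, hence is positive when $q$ is near $1$ (and if the index set is empty, the product is $+1$ by convention). The two extra factors therefore contribute $+1$ in the limit, matching their absence from $\tilde c$.

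The main (but mild) obstacle is the bookkeeping: keeping careful track of which subfactor of $\tilde s_{q}$ corresponds to which subfactor of $\tilde c$, and verifying the index shift from $i\leq j \leq m-1$ to $i<j\leq m-1$ in the last product. Once this organization is in place, the proposition follows immediately from $\lim_{q\to 1}[k]_{q}=k$ and local constancy of the sign function on $\mathbb{R}\setminus\{0\}$.
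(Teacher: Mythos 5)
Your argument is correct and is essentially the paper's proof: the paper likewise reduces the claim to the observation that the two factors of $\tilde s_{q}(\zeta_{\Lambda})$ with no counterpart in $\tilde c(\zeta_{\Lambda})$, namely $[\nu_{mi}-\nu_{i}]_{(\lambda_{i}-\lambda_{mi}-1)}!$ and the $j=i$ term $[\nu_{m-1,i}-\nu_{i}]_{(\lambda_{i}-\lambda_{mi}-1)}!$, are products of strictly positive quantities and hence contribute sign $+1$ in the limit. One transcription slip: the array condition $\lambda_{p,i}-\lambda_{p-1,i}\in\mathbb{Z}_{\geq 0}$ gives the chain $\lambda_{i}\geq\lambda_{m,i}\geq\lambda_{m-1,i}$, not $\lambda_{i}\geq\lambda_{m-1,i}\geq\lambda_{m,i}$ as you wrote, and it is the correct ordering that makes your smallest factor $\lambda_{mi}-\lambda_{m-1,i}+1\geq 1$ as claimed.
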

\begin{proof}
Follows since
\begin{equation*}
(\nu_{mi} - \nu_{i})(\nu_{mi} - \nu_{i} - 1) \cdots (\nu_{mi} - \nu_{i} - (\lambda_{i} - \lambda_{mi} - 1))
\end{equation*}
 is positive, as is
 \begin{equation*}
 (\nu_{m-1, i} - \nu_{i}) (\nu_{m-1,i} - \nu_{i} - 1) \cdots (\nu_{m-1,i} - \nu_{i} - (\lambda_{i} - \lambda_{mi} - 1)).
 \end{equation*}
 \end{proof}
 
 We use this to provide a formula for the signature character of the representation $M(\lambda)$ of $\mathfrak{gl}_{n}$.
 \begin{corollary} \label{infsigcharq=1}
 We have
 \begin{equation*}
 \lim_{q \rightarrow 1} ch_{s}(M(\lambda)) = \sum_{\zeta_{\Lambda} \in A(\lambda)} \tilde c(\zeta_{\Lambda}) e^{\text{wt}(\zeta_{\Lambda})}.
 \end{equation*}
 \end{corollary}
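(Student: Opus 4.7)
The plan is a direct combination of Theorem \ref{infsigchar} with the preceding Proposition. Theorem \ref{infsigchar} expresses
\begin{equation*}
ch_s(M(\lambda)) = \sum_{\zeta_\Lambda \in A(\lambda)} \tilde{s}_q(\zeta_\Lambda)\, e^{\text{wt}(\zeta_\Lambda)},
\end{equation*}
and the preceding Proposition identifies the termwise $q \to 1$ limit of each coefficient $\tilde{s}_q(\zeta_\Lambda)$ as $\tilde{c}(\zeta_\Lambda)$. The plan is therefore simply to take the limit on the right-hand side term by term, interchange limit and sum, and invoke the Proposition.

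The interchange of limit and sum is harmless because $ch_s(M(\lambda))$ is a formal series in the weight exponentials $e^{\mu}$: each weight space $M(\lambda)_\mu$ is finite-dimensional, so only finitely many arrays $\zeta_\Lambda \in A(\lambda)$ contribute to the coefficient of any given $e^\mu$. Thus, for each fixed weight $\mu$, passing to the $q \to 1$ limit reduces to a finite interchange, and the Proposition applies directly to each summand. Concretely, the $e^\mu$-coefficient of $\lim_{q \to 1} ch_s(M(\lambda))$ equals $\sum_{\text{wt}(\zeta_\Lambda) = \mu} \tilde{c}(\zeta_\Lambda)$, which is exactly the $e^\mu$-coefficient of the right-hand side in the statement.

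The only subtlety worth flagging is that one should know $\lim_{q \to 1} ch_s(M(\lambda))$ really does compute the signature character of the classical $\mathfrak{gl}_n$ Verma module. This is automatic because the norms $(\zeta_\Lambda, \zeta_\Lambda)$ depend continuously on $q$ and are nonzero at $q=1$ for generic $\lambda$, so the sign of each norm is locally constant near $q=1$ and agrees with its classical value. There is no real obstacle beyond this bookkeeping; the substantive content is already carried by the previous Proposition.
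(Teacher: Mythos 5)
Your proposal is correct and matches the paper's (implicit) argument: the corollary is stated as an immediate consequence of Theorem \ref{infsigchar} together with the preceding Proposition, which is exactly the termwise-limit combination you describe. Your additional remarks about the finite-dimensionality of weight spaces and the local constancy of the signs near $q=1$ are sound bookkeeping but not needed beyond what the paper already assumes.
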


We now study the case where $\lambda$ is in a particular region, termed the \textit{Wallach region} (as in \cite{WY}):
\begin{equation*}
\lambda_{1} < \lambda_{2} < \cdots < \lambda_{n}
\end{equation*}
with $\lambda_{i+1} - \lambda_{i} \geq 1$.   In this case, the signature character has an explicit factorized form.  We note that this was first computed by Wallach (for all Lie algebras, not just $\mathfrak{gl}_{n}$), and then used by Yee in determining signature characters for $\lambda$ in any region \cite{NW, WY, WY2}.

\begin{proposition} \label{Wall}
For $\lambda$ in the Wallach region, we have
\begin{equation*}
\tilde c(\zeta_{\Lambda}) = \prod_{1 \leq i \leq n-1} (-1)^{\lambda_{i} - \lambda_{ii}}.
\end{equation*}
\end{proposition}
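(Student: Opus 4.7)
My plan is a direct sign analysis of $\tilde c(\zeta_\Lambda)$ using the Wallach inequalities, followed by systematic cancellation across the many indexed factors. First I would record the consequences of the Wallach condition: $\lambda_{i+1} - \lambda_i \geq 1$ gives $\nu_i - \nu_j = (\lambda_j - \lambda_i) - (j - i) \geq 0$ for $j > i$, with strict inequality for generic $\lambda$, so $\nu_1 > \nu_2 > \cdots > \nu_n$. Combined with $\nu_{mi} - \nu_i = \lambda_i - \lambda_{mi} \in \mathbb{Z}_{\geq 0}$, one gets $\nu_{mi} \geq \nu_i$, and the sign of every factor appearing in $\tilde c$ is determined by integer comparisons; for instance a factor of the form $\nu_{mj} - \nu_i - l$ is negative precisely when $\lambda_j - \lambda_{mj} - l < \nu_i - \nu_j$, the right-hand side being a strictly positive non-integer by genericity.

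The key combinatorial step is to pair factors across different outer indices $(m, i)$. For fixed $i$ and $j > i$, the factor $\nu_{r, j} - \nu_i - l$ appears both in the third subproduct at $(r+1, i)$, for $l$ ranging over $0, 1, \ldots, \lambda_i - \lambda_{r+1, i} - 1$, and in the second subproduct at $(r-1, i)$, for $l$ ranging over $1, 2, \ldots, \lambda_i - \lambda_{r-1, i}$. Because $\lambda_{r-1, i} \leq \lambda_{r+1, i}$, the two ranges overlap on $l = 1, \ldots, \lambda_i - \lambda_{r+1, i} - 1$; on this overlap each term appears twice, its sign squares to $+1$, and it drops out. The residuals are the $l = 0$ term from the third subproduct, contributing a factor $\nu_{r, j} - \nu_i$, and the terms from the second subproduct for $l = \lambda_i - \lambda_{r+1, i}, \ldots, \lambda_i - \lambda_{r-1, i}$, which after the substitution $l = (\lambda_i - \lambda_{r+1, i}) + l'$ simplify to the cleaner $\nu_{r, j} - \nu_{r+1, i} - l'$.

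These residuals then combine with the first-subproduct factors $(\nu_{r, j} - \nu_{r, i})(\nu_{r, j} - \nu_i)$ in a telescoping fashion across $r \geq j$. After the telescoping, only contributions tied to the top row ($\nu_{n, j} = \nu_j$) and to the diagonal entries $\nu_{ii}$ survive. Using $\nu_{ii} = \nu_i + (\lambda_i - \lambda_{ii})$ together with the Wallach inequalities, each unit of the diagonal drop $\lambda_i - \lambda_{ii}$ contributes exactly one $(-1)$ factor to the total sign, yielding the claimed $\prod_{i=1}^{n-1}(-1)^{\lambda_i - \lambda_{ii}}$.

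The main obstacle is executing the telescoping cleanly at the boundary indices: when $m = i$ or $m = n-1$, one of the second or third subproducts is missing from the pairing; when $j = i + 1$, the inner ranges are short; and when some $\lambda_i - \lambda_{mi}$ equals zero, the corresponding subproduct is empty. I expect the cleanest organization to be an induction on $n$: peel off the top row of $\Lambda$ and apply the $(n-1)$-case to a suitably reduced array, picking up a small residual correction from the newly-exposed row. The subtle part of that induction is identifying precisely which first-, second-, and third-subproduct terms depend on the top row versus those that sit entirely below it, so that the induction hypothesis can be applied to a clean $(n-1)$-array.
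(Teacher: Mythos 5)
There is a genuine gap here, and it sits exactly where you defer the work. Your opening sign analysis and the pairing of the overlapping $l$-ranges between the second subproduct at $(r-1,i)$ and the third at $(r+1,i)$ are set up correctly, but the argument then rests on a ``telescoping'' whose boundary cases you explicitly leave unresolved, followed by an induction on $n$ that you do not carry out. The reason this is not a minor tidying issue is something you half-observe yourself: the sign of a residual factor such as $\nu_{rj}-\nu_i-l$ is governed by how the integer $\lambda_j-\lambda_{rj}-l$ compares with $\nu_i-\nu_j=(\lambda_j-\lambda_i)-(j-i)$, and this last quantity varies over the Wallach region --- it can be arbitrarily close to $0$ or arbitrarily large. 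So the set of negative residual factors genuinely depends on where $\lambda$ sits in the region, and the assertion that everything cancels down to the diagonal contribution $\prod_i(-1)^{\lambda_i-\lambda_{ii}}$ \emph{uniformly in $\lambda$} is precisely the content of the proposition; it is claimed but not proved.

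The paper's proof avoids this entirely with one observation your plan is missing: the Wallach region meets no degeneracy hyperplanes, so the norm of each basis vector is a nonvanishing continuous function of $\lambda$ there, hence its sign is constant on the region, and one may compute at a single convenient point where the gaps $\lambda_{i+1}-\lambda_i$ are enormous. At such a point $\nu_1\gg\nu_2\gg\cdots\gg\nu_n$ dominates all the bounded corrections $\lambda_i-\lambda_{mi}$, so every factor indexed by $j>i$ is manifestly positive; the only negative contributions are the explicit $(-1)^{\nu_{mi}-\nu_i}$ and the $j=i$ run $[\nu_{mi}-\nu_{m-1,i}-1]_{(\lambda_i-\lambda_{mi}-1)}!$, which cancel against each other when $m>i$ and leave $(-1)^{\lambda_i-\lambda_{ii}}$ when $m=i$. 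To repair your argument, either import that constancy-plus-deformation step (after which your case analysis collapses to a few lines), or actually execute the telescoping while tracking the dependence on $\lfloor\nu_i-\nu_j\rfloor$ and proving it cancels for every admissible array and every configuration of gaps.
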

\begin{proof}
We use the formula (from the previous section, although one can use either)
\begin{multline*}
\tilde s_{q}(\zeta_{\Lambda}) = \prod_{1 \leq i \leq m < n} \Big\{ \prod_{i<j \leq m} [\nu_{mi}^{\Lambda} - \nu_{mj}^{\Lambda}][\nu_{i} - \nu_{mj}^{\Lambda}] \Big\}  (-1)^{\nu_{mi}^{\Lambda} - \nu_{i}} \{ [\nu_{mi}^{\Lambda} - \nu_{i}]_{(\lambda_{i} - \lambda_{m,i}-1)}! \} \\
\times \Big \{ \prod_{i < j \leq m+1} [\nu_{mi}^{\Lambda} - \nu_{m+1,j}^{\Lambda}]_{(\lambda_{i}-\lambda_{m,i}-1)}! \Big \} \Big \{ \prod_{i \leq j \leq m-1} [\nu_{mi}^{\Lambda} - \nu_{m-1,j}^{\Lambda} - 1]_{(\lambda_{i} - \lambda_{m,i}-1)}! \Big \}. \\
\end{multline*}
First note that the signature of an arbitary weight space is constant over this region, since the region does not intersect any degeneracy hyperplanes.  So to compute the signature of a fixed weight space, we can take the differences $\lambda_{i+1} - \lambda_{i}$ to be very large.  We check for fixed $1 \leq i \leq m < n$:
\begin{equation*}
\nu_{mi} - \nu_{mj} > 0, \nu_{i} - \nu_{mj} > 0
\end{equation*}
\begin{equation*}
\nu_{mi} - \nu_{i} > 0
\end{equation*}
\begin{equation*}
\nu_{mi} - \nu_{m+1, j} > 0
\end{equation*}
\begin{equation*}
\nu_{mi} - \nu_{m-1,j} - 1 > 0, \text{ unless } j=i \text{ in which case } <0.
\end{equation*}
So for $i \neq m$, we get sign $(-1)^{\nu_{mi} - \nu_{i}}(-1)^{\lambda_{i} - \lambda_{mi}} = +1$ and for $i = m$, we get $(-1)^{\nu_{ii} - \nu_{i}}$.  So the total is
\begin{equation*}
\prod_{1 \leq i \leq n-1} (-1)^{\lambda_{i} - \lambda_{ii}},
\end{equation*}
as desired.

Note that one could instead used Lemma \ref{onestep} (i.e., just increasing the node $(m,i)$ by one) and iterate to prove the result.  The sign of that step is $-1$ if $i=m$ and $1$ otherwise.
\end{proof}

\begin{corollary}
The formula for the signature character in the Wallach region is 
\begin{equation*}
 \frac{e^{\lambda}}{\displaystyle \prod_{1 \leq i \leq m<n} \Big( 1 + \frac{x_{m}}{x_{i}} \Big)}.
\end{equation*}
\end{corollary}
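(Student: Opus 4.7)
The plan is to combine Proposition \ref{Wall} with Corollary \ref{infsigcharq=1} and evaluate the resulting sum as a product of geometric series. First, I parametrize the basis $A(\lambda)$ bijectively by tuples $k = (k_{m,i})_{1 \leq i \leq m \leq n-1}$ of non-negative integers via $k_{m,i} := \lambda_{m+1,i} - \lambda_{m,i}$. The conditions defining $A(\lambda)$ say precisely that each $k_{m,i} \in \mathbb{Z}_{\geq 0}$, and the array $\Lambda$ is recovered by $\lambda_{m,i} = \lambda_i - \sum_{l=m}^{n-1} k_{l,i}$.

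Next, I translate both $\tilde c(\zeta_\Lambda)$ and $\text{wt}(\zeta_\Lambda)$ into these coordinates. For the sign, Proposition \ref{Wall} gives $\tilde c(\zeta_\Lambda) = \prod_{i=1}^{n-1}(-1)^{\lambda_i - \lambda_{i,i}}$, and using the identity $\lambda_i - \lambda_{i,i} = \sum_{l=i}^{n-1} k_{l,i}$ this becomes the fully multiplicative expression $\prod_{1 \leq i \leq m \leq n-1}(-1)^{k_{m,i}}$. For the weight, a direct check from (\ref{wt}) shows that decreasing a single entry $\lambda_{m,i}$ by $1$ changes $\text{wt}(\zeta_\Lambda)$ by $-\varepsilon_m + \varepsilon_{m+1}$ (with $\varepsilon_j$ the standard basis of $\mathbb{R}^n$). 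Since increasing $k_{m_0,i}$ by $1$ (with all other $k$'s fixed) amounts to decreasing $\lambda_{m,i}$ by $1$ for every $i \leq m \leq m_0$, the intermediate shifts telescope and the net change is $-\varepsilon_i + \varepsilon_{m_0+1}$. Writing $x_j = e^{\varepsilon_j}$, this yields $e^{\text{wt}(\zeta_\Lambda)} = e^\lambda \prod_{1 \leq i \leq m \leq n-1}(x_{m+1}/x_i)^{k_{m,i}}$.

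Substituting into Corollary \ref{infsigcharq=1}, both the sign and the weight factors decompose multiplicatively over the same index set, so the sum factors as a product of geometric series:
\begin{equation*}
\lim_{q \to 1} ch_s(M(\lambda)) = e^\lambda \prod_{1 \leq i \leq m \leq n-1}\sum_{k \geq 0}(-x_{m+1}/x_i)^k = \frac{e^\lambda}{\prod_{1 \leq i \leq m \leq n-1}(1 + x_{m+1}/x_i)},
\end{equation*}
and reindexing the product ($m \mapsto m-1$, equivalently running over $1 \leq i < m \leq n$) yields the stated formula.

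The main obstacle is really the bookkeeping to ensure that both the sign and the weight assemble multiplicatively over the parameters $k_{m,i}$: it is the special form of Proposition \ref{Wall}, in which the sign depends only on the column sums $\sum_{l \geq i} k_{l,i}$ rather than on finer data, that makes this decoupling possible. Without that simplification the sign would couple across different $k_{m,i}$'s and the sum would fail to factor into geometric series.
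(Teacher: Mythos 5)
Your proof is correct and is essentially the paper's argument made explicit: the paper's ``moves corresponding to $(m,i)$'' are exactly your unit increments of $k_{m,i}=\lambda_{m+1,i}-\lambda_{m,i}$, each contributing a sign $-1$ (via Proposition \ref{Wall}) and a weight factor $x_{m+1}/x_{i}$, and the factorization into geometric series is the same. One caveat: your final product runs over $1\leq i<m\leq n$ with factor $x_{m}/x_{i}$, which does not literally match the printed index set $1\leq i\leq m<n$ in the statement; the $n=2$ check ($ch_{s}=e^{\lambda}/(1+x_{2}/x_{1})$, consistent with Example (3) of the paper) confirms that your indexing is the correct one, so the discrepancy is a typo in the stated formula rather than a gap in your argument.
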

\begin{proof}
We consider moves corresponding to $(m,i)$ that increase entries in $(r, i)$ for $r \leq m$ by one unit.  By Proposition \ref{Wall}, such a move induces a sign change of $(-1)$.  We also have
\begin{equation*}
\frac{1}{1 + \frac{x_{m}}{x_{i}}} = 1 - (x_{m}/x_{i}) + (x_{m}/x_{i})^{2} - (x_{m}/x_{i})^{3} + \cdots.
\end{equation*}
On the other hand, such moves (varying over all $1 \leq i \leq m<n$) generate all possible weights.  This provides the desired bijection between the two quantities.
\end{proof}

\section{Examples}

In this section, we use Theorem \ref{finsigchar} and Theorem \ref{infsigchar} to compute some examples of signature characters for small values of $n$.  We will also use Corollary \ref{infsigcharq=1} to compute signature characters of Verma modules in the classical limit $q \rightarrow 1$ for small values of $n$.

\begin{enumerate}

\item \textbf{$n=2$, formula for $ch_{s}(L(\lambda))$.}  We let $\lambda = (\lambda_{1}, \lambda_{2})$ with $\lambda_{1} - \lambda_{2} \in \mathbb{Z}_{+}$.  We index Gelfand-Tsetlin patterns by $\Lambda_{i}$ with $\lambda_{1}, \lambda_{2}$ in the first row and $\lambda_{11} = \lambda_{1} - i$ in the second row for $i=0,1, \dots, (\lambda_{1} - \lambda_{2})$.  We use Definition \ref{eltsgn} to compute
\begin{multline*}
s(\zeta_{\Lambda_{i}}) = (-1)^{\nu_{11} - \nu_{1}} \{ [\nu_{11} - \nu_{1}]_{(\lambda_{1} - \lambda_{11} - 1)}! \} \{ [\nu_{11} - \nu_{22}]_{(\lambda_{1} - \lambda_{11}-1)}! \} \\
= (-1)^{i} \{ [i]_{(i-1)}! \} \{ [ (\lambda_{2}- \lambda_{1}) +i - 1]_{(i-1)}! \}.
\end{multline*}
So we have
\begin{equation*}
ch_{s}(L(\lambda)) = \sum_{0 \leq i \leq (\lambda_{1} - \lambda_{2})}  \{ [i]! \} \{ [ \lambda_{1} - \lambda_{2}]_{(i-1)}! \} e^{( \lambda_{1} - i,  \lambda_{2} + i)}.
\end{equation*}

\item \textbf{$n = 3$, formula for $ch_{s}(L(\lambda))$.}  We let $\lambda = (\lambda_{1}, \lambda_{2}, \lambda_{3})$, with $\lambda_{i} - \lambda_{i+1} \in \mathbb{Z}_{\geq 0}$.  We let $\Lambda = (\lambda_{1}, \lambda_{2}, \lambda_{3}; \lambda_{1} - d_{1}, \lambda_{2} - d_{2}; \lambda_{1} - d_{1} - d_{3})$ denote an arbitrary Gelfand-Tsetlin pattern indexing the basis.  Note that we must have $0 \leq d_{1} \leq (\lambda_{1} - \lambda_{2})$ and $0 \leq d_{2} \leq (\lambda_{2} - \lambda_{3})$ and $0 \leq d_{3} \leq (\lambda_{1} - d_{1}) - (\lambda_{2} - d_{2})$ to satisfy the interlacing condition.  We use Equation (\ref{finqsig}) to compute
\begin{multline}\label{n=3sgn}
s_{q}(d_{1}, d_{2}, d_{3}) = s_{q}(\zeta_{\Lambda})  \\ = \{ [d_{1}]! [d_{2}]! [d_{1} + d_{3}]! [d_{2} - d_{1} + \nu_{2} - \nu_{1}]  [d_{2} + \nu_{2}-\nu_{1}] \\ \times [\nu_{2}-\nu_{1}-1]_{(d_{1}-1)}! [\nu_{3}-\nu_{1}-1]_{(d_{1}-1)}! \} 
 \{ [d_{1} + d_{3}]_{(d_{1}-1)}! [\nu_{3}-\nu_{2}-1]_{(d_{2}-1)}! [d_{2}+\nu_{2}-\nu_{1}]_{(d_{1}+d_{3}-1)}!\}.
\end{multline}
So we have
\begin{equation*}
\sum_{\substack{0 \leq d_{1} \leq (\lambda_{1} - \lambda_{2}) \\ 0 \leq d_{2} \leq (\lambda_{2} - \lambda_{3}) \\ 0 \leq d_{3} \leq (\lambda_{1} - d_{1})- (\lambda_{2} - d_{2})}} s_{q}(d_{1},d_{2},d_{3}) e^{(\lambda_{1} - d_{1} - d_{3}, \lambda_{2} - d_{2} + d_{3}, \lambda_{3} + d_{1} + d_{2})},
\end{equation*}
where $s_{q}(d_{1},d_{2},d_{3})$ is as computed in (\ref{n=3sgn}).

\item \textbf{$n=2$, formula for $ch_{s}(M(\lambda))$.}  We let $\lambda = (\lambda_{1}, \lambda_{2})$ with $\lambda_{1} - \lambda_{2} \in \mathbb{R}$.  We have the same formula as $ch_{s}(L(\lambda))$, except the basis is parametrized differently:
\begin{equation*}
ch_{s}(M(\lambda)) = \sum_{0 \leq i} (-1)^{i} \{[i]_{i-1}!\} \{ [(\lambda_{2} - \lambda_{1}) + i-1 ]_{i-1}!\} e^{(\lambda_{1} - i, \lambda_{2} + i)}.
\end{equation*}

\item \textbf{$n=3$, formula for $ch_{s}(M(\lambda))$.}  We let $\lambda = (\lambda_{1}, \lambda_{2}, \lambda_{3})$ with $\lambda_{i} - \lambda_{i+1} \in \mathbb{R}$.  We have the same formula as $ch_{s}(L(\lambda))$, except the basis is parametrized differently:
\begin{equation*}
ch_{s}(M(\lambda)) = \sum_{d_{1}, d_{2}, d_{3} \geq 0} s_{q}(d_{1}, d_{2}, d_{3}) e^{(\lambda_{1} - d_{1} - d_{3}, \lambda_{2} - d_{2} + d_{3}, \lambda_{3} + d_{1} + d_{2})}, 
\end{equation*}
where $s_{q}(d_{1}, d_{2}, d_{3})$ is computed in (\ref{n=3sgn}).

\item \textbf{$n=2$, formula for $\lim_{q \rightarrow 1} ch_{s}(M(\lambda))$.}  Using (3) and Equation (\ref{sgncomp}), we have
\begin{multline*}
\lim_{q \rightarrow 1} ch_{s}(M(\lambda)) \\ = \sum_{0 \leq i} (-1)^{i} \{ (i)(i-1) \cdots (1) \} \{ (\lambda_{2} - \lambda_{1} + i - 1)(\lambda_{2} - \lambda_{1} + i -2) \cdots (\lambda_{2} - \lambda_{1}) \} e^{(\lambda_{1}-i, \lambda_{2}+i)} \\
= \sum_{0 \leq i} (-1)^{i} (-1)^{\min \{0, \lfloor \lambda_{2} - \lambda_{1} + i - 1 \rfloor + 1\}} (-1)^{\min\{0, \lfloor\lambda_{2} - \lambda_{1} \rfloor  \}} e^{(\lambda_{1}-i, \lambda_{2}+i)}.
\end{multline*}

\item \textbf{$n=3$, formula for $\lim_{q \rightarrow 1} ch_{s}(M(\lambda))$.}
We note that in (\ref{n=3sgn}), the terms $d_{1}, d_{2}, d_{1} + d_{3}$ are all positive, so we have
\begin{multline*}
\lim_{q \rightarrow 1} s_{q}(d_{1}, d_{2}, d_{3}) = \{ (d_{2} - d_{1} + \nu_{2} - \nu_{1})(d_{2} + \nu_{2} - \nu_{1})(\nu_{2} - \nu_{1} - 1)_{(d_{1} - 1)}! \\ \times (\nu_{3} - \nu_{1}-1)_{(d_{1} - 1)}! (\nu_{3}-\nu_{2}-1)_{(d_{2}-1)}! (d_{2} + \nu_{2} - \nu_{1})_{(d_{1} + d_{3}-1)}! \}.
\end{multline*}
Thus,
\begin{multline*}
\lim_{q \rightarrow 1} ch_{s}(M(\lambda)) =(-1)^{\min \{ 0, \lfloor \nu_{2} - \nu_{1} -1 \rfloor +1 \}} (-1)^{\min\{0, \lfloor \nu_{3}-\nu_{2}-1 \rfloor +1 \}}  (-1)^{\min \{ 0, \lfloor \nu_{3} - \nu_{1} - 1 \rfloor + 1\}} \\ \times \sum_{d_{1}, d_{2}, d_{3} \geq 0} \Bigg( \{d_{2} - d_{1} + \nu_{2} - \nu_{1} \} \{d_{2} + \nu_{2} - \nu_{1} \}   (-1)^{\min \{ 0, \lfloor \nu_{2} - \nu_{1} - 1 - (d_{1}-1) \rfloor \}} \\ \times  (-1)^{\min\{ 0,\lfloor \nu_{3} - \nu_{1} - 1 - (d_{1} - 1) \rfloor \} }  (-1)^{\min \{0, \lfloor \nu_{3}-\nu_{2}-1-(d_{2}-1) \rfloor \}}  (-1)^{\min\{0, \lfloor d_{2} + \nu_{2}-\nu_{1} \rfloor + 1 \}}  \\ \times (-1)^{\min\{0, \lfloor d_{2} + \nu_{2}-\nu_{1} - (d_{1} +d_{3}-1) \rfloor \}} \Bigg)   e^{(\lambda_{1} - d_{1} - d_{3}, \lambda_{2} - d_{2} + d_{3}, \lambda_{3} + d_{1} + d_{2})};
\end{multline*}
recall that $\nu_{i} = i-\lambda_{i} - 1$.

\end{enumerate}


\begin{thebibliography}{10}







\bibitem{ESG}
P.~I. Etingof and E.~Stoica; with an appendix by S.~Griffeth, Represent. Theory {\bf 13} (2009), 349--370.

\bibitem{M}
A.~I. Molev, {\it Gelfand-Tsetlin bases for classical Lie algebras}, arXiv:math/0211289v2.

\bibitem{NT}
M.~Nazarov and V.~Tarasov, {\it Yangians and Gelfand-Zetlin Bases}, Publ. RIMS, Kyoto Univ. {\bf 30} (1994), 459--478.

  
  
\bibitem{S}
E.~Stoica, {\it Unitary representations of Hecke algebras of complex reflection groups}, http://arxiv.org/abs/0910.0680.



\bibitem{V2}
V.~Venkateswaran, {\it Signatures of representations of Hecke algebras and rational Cherednik algebras}, preprint available at http://arxiv.org/abs/1409.6663.




\bibitem{NW}
N.~Wallach. {\it On the unitarizability of derived functor modules}, Invent. Math., 78(1):131Ð141, 1984.



\bibitem{WY}
W.~Yee. {\it The signature of the Shapovalov form on irreducible Verma modules}, Representation Theory, 9:638Ð677, 2005.

\bibitem{WY2}
W.~Yee. {\it Signatures of invariant Hermitian forms on irreducible highest-weight modules}, Duke Mathematical Journal, 142(1), 2008.



\end{thebibliography}
\end{document}